\allowdisplaybreaks \numberwithin{equation}{section}
\newtheorem{theorem}{Theorem}[section]
\theoremstyle{plain}
\newtheorem{definition}{Definition}[section]
\newtheorem{lemma}{Lemma}[section]
\newtheorem{proposition}{Proposition}[section]
\numberwithin{equation}{section}
\begin{document}

\baselineskip=15pt
\renewcommand{\arraystretch}{2}
\arraycolsep=1pt

\title
{\bf \Large On the generalized porous medium equation in Fourier-Besov spaces \footnotetext{\hspace{-0.25cm}
\noindent{The research was supported by the NNSF of China under grant $\#$11601223 and $\#$ 11626213.}\\
 }}

\author
{\vspace{-0.5cm}\bf Weiliang Xiao$^1$ \quad Xuhuan Zhou$^{2*}$\\
\vspace{-0.5cm}\small\it School of Applied Mathematics,  Nanjing University of Finance and Economics, Nanjing, 210023, China;\\
\vspace{-0.5cm}\small\it E-mail:~xwltc123@163.com\\
\vspace{-0.5cm}\small\it Department of Information Technology, Nanjing Forest Police College,  Nanjing, 210023, China; \\
\vspace{-0.5cm}\small\it E-mail:~zhouxuhuan@163.com \\
}

\date{}

\maketitle

\begin{center}
\begin{minipage}{15.2cm}
\small{\bf Abstract}\quad We study a kind of generalized porous medium equation with fractional Laplacian and abstract pressure term. For a large class of equations corresponding to the form:
                $u_t+\nu \Lambda^{\beta}u=\nabla\cdot(u\nabla Pu)$,
 we get their local well-posedness in Fourier-Besov spaces for large initial data. If the initial data is small, then the solution becomes global. Furthermore, we prove a blowup criterion for the solutions. \medskip

\noindent{\bf Keywords}\quad porous medium equation, well-posedness, blowup criterion, Fourier-Besov spaces\\
\noindent{\bf MR (2010)}\quad 42B37, 35Q35, 35K55, 76S05

\end{minipage}
\end{center}

 \section{Introduction}
 In this paper, we study the nonlinear nonlocal equation in $\mathbb{R}^n$ of the form
      \begin{equation}\label{pme}
       \begin{cases}
        u_t+\nu \Lambda^{\alpha}u=\nabla\cdot(u\nabla Pu);   \\
        u(0,x)=u_0.
       \end{cases}
     \end{equation}
Usually, $u=u(t,x)$ ia a real-valued function, represents a density or concentration.
The dissipative coefficient $\nu>0$ corresponds to the viscid case, while $\nu=0$ 
corresponds to the inviscid case. In this paper we study the viscid case and take $\nu=1$ for simplicity. The fractional operator $\Lambda^{\alpha}$ is defined 
by Fourier transform as $(\Lambda^{\alpha}u)^{\wedge}=|\xi|^{\alpha}\hat{u}$. $P$ is an 
abstact operator.

Equation (\ref{pme}) here comes from the same proceeding with that of the fractional porous medium equation (FPME) introduced by Caffarelli and V\'azquez 
\cite{caffarelli01}.  In fact, equation (\ref{pme}) comes into being by adding the fractional dissipative term $\nu \Lambda^{\alpha}u$ to the continuity equation $u_t+\nabla\cdot(uV)=0$, where the velocity $V=-\nabla p$ and the velocity potential or pressure $p$ is related to $u$ by an abstract operator $p=Pu$. 

The absrtact form pressure term $Pu$ gives a good suitability in many cases. The 
simplest case comes from a model in groundwater in filtration \cite {bear01,vazquez01}: $u_t=\triangle u^2$, that is: $\nu=0,Pu=u$. A more general case appears in the fractional 
porous medium equation \cite{caffarelli01} when  $\nu=0$ and $Pu=\Lambda^{-2s}u,
0<s<1$. In the critical case when $s=1$, it is the mean field equation first studied by Lin
and Zhang \cite{lin01}. Some studies on the well-posedness and regularity on those equations we refer to \cite{biler02,caffarelli02,caffarelli03,serfaty01,stan01,vazquez02,zhou01} and the references therein.

In the FPME, the pressure can also be represented by Riesz potential as 
$Pu=\Lambda^{-2s}u=\mathcal{K}*u,$
with kernel 
$\mathcal{K}=c_{n,s}|y|^{2s-n}$. Replacing the kernel $\mathcal{K}$ by other functions in this form: $Pu=\mathcal{K}*u$, equation (\ref{pme} ) also appears in granular flow and biological swarming, named aggregation equation. The typical kernels are the Newton potential $|x|^{\gamma}$ and the exponent potential $-e^{-|x|}$.

One of concerned problems on this equation is the singularity of the potential $Pu$ which holds the well-posedness or leads to the blowup solution. Generally, smooth kernels at origin $x=0$ lead to the global in time solution \cite{bertozzi02}, meanwhile nonsmoooth kernels may lead to blowup phenomenon \cite{li02}. Li and Rodrigo \cite{li01,li02} studied the well-posedness and blowup criterion of equation (\ref{pme}) with the pressure $Pu=\mathcal{K}*u$, where $\mathcal{K}(x)=e^{-|x|}$ in Sobolev spaces. Wu and Zhang \cite{wu01} generalize their work to require $\nabla \mathcal{K}\in W^{1,1}$ which includes the case $\mathcal{K}(x)=e^{-|x|}$. They take advantage of the controllability in Besov spaces of the convolution $\mathcal{K}*u$ under this condition, as well as the controllability of its gradient $\nabla\mathcal{K}*u$.  

In this article we study the well-posedness and blowup criterion of equation (\ref{pme}) in Fourier-Besov spaces under an abstract pressure condition 
    \begin{align}\label{estimate11}
      \|\widehat{\triangle_k(\nabla Pu)}\|_{L^p}
      \leq C2^{k\sigma}\|\widehat{\triangle_ku}\|_{L^p}.
    \end{align}
In Fourier-Besov spaces, it is  the localization express of the norm estimate
 \begin{align}\label{estimate12}
  \|\nabla Pu\|_{F\dot{B}_{p,q}^s}\leq C\|u\|_{F\dot{B}_{p,q}^{s+\sigma}}.
 \end{align}
Corresponding to the FPME, i.e. $Pu=\Lambda^{-2s}$, we get $\sigma=1-2s$ obviously. And if $Pu=\mathcal{K}*u$, $\mathcal{K}\in W^{1,1}$ in the aggregation equation, we get $\sigma=1$ when $\mathcal{K}\in L^1$ and $\sigma=0$ when $\nabla\mathcal{K}\in L^1$.
  
 The Fourier-Besov spaces we use here come from  Konieczny and Yoneda \cite{konieczny01} when deal with the Navier-Stokes equation (NSE) with Coriolis force. Besides, Fourier-Besov spaces have been widely used to study the well-posedness, singularity, self-similar solution, etc. of Fluid Dynamics
in various of forms. For instance, the early pseudomeasure spaces $PM^{\alpha}$ in which Cannone and Karch studied the smooth and singular properties of Navier-Stokes equations \cite{cannone01}. The Lei-Lin spaces $\mathcal{X}^{\sigma}$ deal with global solutions to the NSE \cite{lei01} and to the quasi-geostrophic equations (QGE) \cite{benameur01}. The Fourier-Herz spaces $\mathcal{B}_q^{\sigma}$ in the Keller-Segel system \cite{iwabuchi01}, in the NSE with Coriolis force \cite{iwabuchi02} and in the magneto-hydrodynamic equations (MHD) \cite{liu01}. 

In the case of Besov spaces, we gain some well-posedness and blow-up results of equation (\ref{pme}) under an corresponding condition to (\ref{estimate12}) in \cite{zhou02}. Due to the difficulty in deal with the nonlinear term $\nabla\cdot(u\nabla Pu)$, in that case we need a little strict initial condition: $u_0\in\dot{B}_{p,1}^{n/p+\sigma-\beta}\cap\dot{B}_{p,1}^{n/p+\sigma-\beta+1}$. However, in this paper, we find that Fourier-Besov spaces are powerful in deal with the nonlinear term, by a very different method used in \cite{zhou02}, we prove the following theorems:

\begin{theorem}
Let $p,q\in[1,\infty]$, $\max\{1,\sigma+1\}<\alpha<n(1-\frac 1p)+\sigma+2$. Then for any $u_0\in\dot{FB}^{n(1-\frac 1p)-\alpha+\sigma+1}_{p,q}$,
equation (1.1) admits a unique mild solution $u$ and
 $$u\in C\big([0,T);\dot{FB}^{n(1-\frac 1p)-\alpha+\sigma+1}_{p,q}\big)\cap \tilde{L}^{1}([0,T);\dot{FB}^{n(1-\frac 1p)+\sigma+1}_{p,q}).$$
 Moreover, there exists a constant $C_0=C_0(\alpha,p,q)$ such that for $u_0$ satisfying $\|u_0\|_{\dot{FB}^{n(1-\frac 1p)-\alpha+\sigma+1}_{p,q}}\leq C_0$,
 the solution $u$ is global, and
 \begin{align*}
\|u\|_{L^{\infty}_T(\dot{FB}^{n(1-\frac 1p)-\alpha+\sigma+1}_{p,q})}+\|u\|_{\tilde{L}^{1}_T(\dot{FB}^{n(1-\frac 1p)+\sigma+1}_{p,q})}
\leq 2C\|u_0\|_{\dot{FB}_{p,q}^{n(1-\frac 1p)-\alpha+\sigma+1}}.
\end{align*}
\end{theorem}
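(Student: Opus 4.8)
The plan is to run a standard fixed-point (Picard iteration) argument on the mild formulation of (\ref{pme}), carried out entirely in the Fourier side. First I would write the Duhamel formula
\begin{align*}
u(t)=e^{-t\Lambda^\alpha}u_0+\int_0^t e^{-(t-\tau)\Lambda^\alpha}\,\nabla\cdot\bigl(u(\tau)\nabla Pu(\tau)\bigr)\,d\tau=:\mathcal L u_0+\mathcal B(u,u)(t),
\end{align*}
and set up the working space
\begin{align*}
\X_T:=C\bigl([0,T);\dot{FB}^{s_0}_{p,q}\bigr)\cap\tilde L^1\bigl([0,T);\dot{FB}^{s_0+\alpha}_{p,q}\bigr),\qquad s_0:=n(1-\tfrac1p)-\alpha+\sigma+1,
\end{align*}
with the norm that is the sum of the two component norms. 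The heat-flow part is handled by the standard smoothing estimates for $e^{-t\Lambda^\alpha}$ in Fourier-Besov spaces: on the $j$-th dyadic block $\|\widehat{\triangle_j e^{-t\Lambda^\alpha}f}\|_{L^p}\lesssim e^{-ct2^{j\alpha}}\|\widehat{\triangle_j f}\|_{L^p}$, which after integrating in $t$ gives $\|\mathcal L u_0\|_{\X_T}\lesssim\|u_0\|_{\dot{FB}^{s_0}_{p,q}}$ (and, via dominated convergence in the $\ell^q$ sum, the $\tilde L^1$-norm of $\mathcal L u_0$ tends to $0$ as $T\to0$, which is what localizes the argument for large data).

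The core of the proof is the bilinear estimate
\begin{align*}
\|\mathcal B(u,v)\|_{\X_T}\le C\,\|u\|_{\X_T}\,\|v\|_{\X_T}.
\end{align*}
To get it I would first observe that on the Fourier side, $\widehat{\nabla\cdot(u\nabla Pv)}(\xi)=i\xi\cdot\widehat{(u\nabla Pv)}(\xi)=i\xi\cdot(\hat u * \widehat{\nabla Pv})(\xi)$, so after applying $\triangle_j$ and the heat semigroup and summing the Bernstein-type gain $2^{j(s_0+\sigma_{\mathrm{eff}})}$ against $e^{-c(t-\tau)2^{j\alpha}}$, the task reduces to estimating the $\dot{FB}$-norm of the product $u\,\nabla Pv$ by a product of norms of $u$ and $v$. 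For this I would use that $\dot{FB}^{s}_{p,q}$ with $s$ in the relevant range is an algebra-type space for the convolution on the Fourier side (equivalently a pointwise-product space in $x$): writing the Bony-type paraproduct decomposition on the frequency side, the hypothesis (\ref{estimate11})/(\ref{estimate12}) converts every $\nabla Pv$ factor into a factor of $v$ at cost $2^{k\sigma}$, i.e. a shift of the regularity index by $\sigma$. The index bookkeeping $s_0=n(1-1/p)-\alpha+\sigma+1$ together with the ``high'' endpoint $s_0+\alpha=n(1-1/p)+\sigma+1$ is chosen precisely so that $u$ enters at the low regularity index $s_0$ (controlled in $L^\infty_T$) while $\nabla Pv$ enters at the high index, consuming one full power of the $\tilde L^1_T\dot{FB}^{s_0+\alpha}$ norm; the restriction $\sigma+1<\alpha<n(1-1/p)+\sigma+2$ guarantees that the resulting summation indices are summable (positivity of the relevant exponent and of its complement) and that $n(1-1/p)+\sigma+1>0$ so the product estimate holds. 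I expect this product/paraproduct estimate in Fourier-Besov spaces — keeping careful track of which factor sits in $L^\infty_T$ and which in $\tilde L^1_T$, and checking that no endpoint index is lost when $q=\infty$ — to be the main technical obstacle.

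With the linear and bilinear estimates in hand, the conclusion follows from the standard abstract fixed-point lemma (Picard / Banach contraction for $u=\mathcal L u_0+\mathcal B(u,u)$ in a ball of $\X_T$): for large $u_0$, choose $T$ small enough that $\|\mathcal L u_0\|_{\X_T}$ is below the smallness threshold $1/(4C)$ (possible by the vanishing of the $\tilde L^1_T$-part as $T\to0$), giving local existence and uniqueness; for $\|u_0\|_{\dot{FB}^{s_0}_{p,q}}\le C_0:=1/(4C^2)$ the same argument runs with $T=\infty$ because then $\|\mathcal L u_0\|_{\X_\infty}\le C\|u_0\|_{\dot{FB}^{s_0}_{p,q}}\le 1/(4C)$ directly, yielding the global bound $\|u\|_{\X_\infty}\le 2C\|u_0\|_{\dot{FB}^{s_0}_{p,q}}$. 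Uniqueness in the full space (not just the small ball) and the time-continuity $u\in C([0,T);\dot{FB}^{s_0}_{p,q})$ are obtained in the usual way: continuity of $\mathcal L u_0$ is classical, and continuity of the Duhamel term follows from the bilinear estimate applied on subintervals together with the already-established $\tilde L^1_T$ bound.
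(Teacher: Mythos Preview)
Your bilinear estimate is essentially correct, and in fact your choice of working directly in $\X_T=L^\infty_T(\dot{FB}^{s_0}_{p,q})\cap\tilde L^1_T(\dot{FB}^{s_0+\alpha}_{p,q})$ with the paraproduct splitting and the assumption~(\ref{estimate11}) reproduces the paper's Proposition~\ref{propositon31} with $(\gamma_1,\gamma_2)=(\infty,1)$; the constraint $\epsilon=\alpha-\sigma-1>\max\{0,-\sigma\}$ gives exactly $\alpha>\max\{1,\sigma+1\}$, so you reach the sharp lower bound in one step, whereas the paper first proves Theorem~\ref{theorem31} under the stronger hypothesis $2\max\{1,\sigma+1\}<\alpha$ (contracting in $\tilde L^2_T$) and only afterwards relaxes it in Section~4.

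There is, however, a genuine gap in your local-existence step for large data. You claim that $\|\mathcal L u_0\|_{\X_T}$ can be made smaller than $1/(4C)$ by taking $T$ small, ``by the vanishing of the $\tilde L^1_T$-part''. But $\X_T$ also contains the $L^\infty_T(\dot{FB}^{s_0}_{p,q})$ component, and $\|e^{-t\Lambda^\alpha}u_0\|_{L^\infty_T(\dot{FB}^{s_0}_{p,q})}\ge\|u_0\|_{\dot{FB}^{s_0}_{p,q}}$ for every $T>0$, so $\|\mathcal L u_0\|_{\X_T}$ is bounded \emph{below} independently of $T$. The abstract fixed-point Lemma~\ref{absolution02} therefore does not apply unless $\|u_0\|_{\dot{FB}^{s_0}_{p,q}}$ is already small. (A secondary issue: for $q=\infty$ your dominated-convergence argument for the $\tilde L^1_T$-part also fails, since the $\ell^\infty$ norm need not go to zero.)

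The paper avoids this by \emph{not} running the contraction in $\X_T$. It contracts instead in an auxiliary space $X=\tilde L^{r}_T(\dot{FB}^{s_0+\alpha/r}_{p,q})\cap\tilde L^{r'}_T(\dot{FB}^{s_0+\alpha/r'}_{p,q})$ with $1<r<\infty$, where the linear part \emph{can} be made small for large data: one splits $u_0=u_0^1+u_0^2$ by a frequency cutoff at level $\lambda$, chooses $\lambda$ so large that $\|e^{-t\Lambda^\alpha}u_0^2\|_X$ is small (this uses only $u_0\in\dot{FB}^{s_0}_{p,q}$), and then estimates $\|e^{-t\Lambda^\alpha}u_0^1\|_X\lesssim\lambda^{\alpha/r}T^{1/r}\|u_0\|$, which \emph{does} vanish as $T\to0$ because $r<\infty$. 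Once the fixed point is found in $X$, the $L^\infty_T\cap\tilde L^1_T$ regularity is recovered a posteriori via Lemma~\ref{transport}. Your global small-data argument is fine as written; it is only the large-data localisation that needs this extra device.
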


   \begin{theorem}
       Let $T^*$ denote the maximal time of existence of $u$ in
       $L^{\infty}_T(\dot{FB}^{\beta}_{p,q})\cap\tilde{L}^{1}_T(\dot{FB}^{\beta+\alpha}_{p,q})$. Here $\beta=n(1-\frac 1p)-\alpha+\sigma+1$.
       If $T^*<\infty$, then
        \begin{align*}
           \|u\|_{\tilde{L}^1([0,T^*);\dot{FB}_{p,q}^{\beta+\alpha})}=\infty.
        \end{align*}
   \end{theorem}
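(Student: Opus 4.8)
The plan is to prove the blowup criterion by contradiction: assume $T^* < \infty$ but $\|u\|_{\tilde L^1([0,T^*);\dot{FB}_{p,q}^{\beta+\alpha})} < \infty$, and show that the solution can be continued past $T^*$, contradicting maximality. The mechanism is standard: if the scaling-critical norm that controls the nonlinearity is finite on $[0,T^*)$, then on a short time window ending at $T^*$ this norm is as small as we like, and the local existence theorem (Theorem 1.1) restarted from a time close to $T^*$ gives a uniform existence time, pushing the solution beyond $T^*$.

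\smallskip

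First I would recall the integral (mild) formulation $u(t) = e^{-t\Lambda^\alpha}u_0 + \int_0^t e^{-(t-s)\Lambda^\alpha}\nabla\cdot(u\nabla Pu)(s)\,ds$ and the linear estimates for the fractional heat semigroup on Fourier--Besov spaces: $\|e^{-t\Lambda^\alpha}f\|_{\dot{FB}^s_{p,q}} \lesssim \|f\|_{\dot{FB}^s_{p,q}}$ and the smoothing estimate controlling $\|\int_0^t e^{-(t-s)\Lambda^\alpha}g(s)\,ds\|_{\tilde L^1_T(\dot{FB}^{s+\alpha}_{p,q})} \lesssim \|g\|_{\tilde L^1_T(\dot{FB}^s_{p,q})}$ together with its $\tilde L^\infty_T(\dot{FB}^s_{p,q})$ counterpart. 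Next I would isolate the bilinear estimate that must already underlie the proof of Theorem 1.1: using \eqref{estimate11}, a Bony-type paraproduct decomposition of $u\nabla Pu$ in frequency, and the divergence structure (which supplies one extra derivative, i.e.\ a factor $2^k$ on the $k$-th block) one gets, at the critical regularity $\beta = n(1-\tfrac1p)-\alpha+\sigma+1$,
\begin{align*}
\Big\|\int_0^t e^{-(t-s)\Lambda^\alpha}\nabla\cdot(u\nabla Pu)(s)\,ds\Big\|_{\tilde L^\infty_t(\dot{FB}^\beta_{p,q})\cap \tilde L^1_t(\dot{FB}^{\beta+\alpha}_{p,q})} \le C\,\|u\|_{\tilde L^1_t(\dot{FB}^{\beta+\alpha}_{p,q})}\,\|u\|_{\tilde L^\infty_t(\dot{FB}^\beta_{p,q})}.
\end{align*}

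\smallskip

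With these tools in hand, the argument runs as follows. Fix the constant $C_0$ from Theorem 1.1. Since $\|u\|_{\tilde L^1([0,T^*);\dot{FB}^{\beta+\alpha}_{p,q})}$ is assumed finite, by absolute continuity of the integral there is $T_0 < T^*$ with $\|u\|_{\tilde L^1([T_0,T^*);\dot{FB}^{\beta+\alpha}_{p,q})}$ smaller than a threshold $\eta$ to be chosen. Taking $u(T_0)$ as new initial data, I would run the fixed-point scheme for $v(t) = e^{-(t-T_0)\Lambda^\alpha}u(T_0) + \int_{T_0}^t e^{-(t-s)\Lambda^\alpha}\nabla\cdot(v\nabla Pv)\,ds$ on $[T_0, T_0+\tau)$. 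The point is that $\|e^{-(t-T_0)\Lambda^\alpha}u(T_0)\|_{\tilde L^1([T_0,T^*);\dot{FB}^{\beta+\alpha}_{p,q})}$ can be made small: it is bounded by $\|u(T_0)\|_{\tilde L^1([T_0,T^*);\dot{FB}^{\beta+\alpha}_{p,q})}$-type quantities, but more robustly one uses that $u$ itself solves the equation, so $e^{-(t-T_0)\Lambda^\alpha}u(T_0) = u(t) - \int_{T_0}^t e^{-(t-s)\Lambda^\alpha}\nabla\cdot(u\nabla Pu)\,ds$ on $[T_0,T^*)$, and both terms on the right are small in $\tilde L^1(\dot{FB}^{\beta+\alpha}_{p,q})$ on $[T_0,T^*)$ (the first by choice of $T_0$, the second by the bilinear estimate, which also forces $\|u\|_{\tilde L^\infty([T_0,T^*);\dot{FB}^\beta_{p,q})}$ to stay bounded, hence the product is controlled). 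So the free evolution from $u(T_0)$ already has small critical norm on all of $[T_0,T^*)$; the standard contraction argument then yields a solution on $[T_0, T_0+\tau)$ with $\tau$ bounded below by a quantity depending only on this small norm — in particular $T_0 + \tau > T^*$. By uniqueness this solution coincides with $u$ on the overlap, so $u$ extends past $T^*$, contradicting the definition of $T^*$. Therefore $\|u\|_{\tilde L^1([0,T^*);\dot{FB}^{\beta+\alpha}_{p,q})} = \infty$.

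\smallskip

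The main obstacle is the second step: making the free evolution $e^{-(t-T_0)\Lambda^\alpha}u(T_0)$ small in $\tilde L^1([T_0,T^*);\dot{FB}^{\beta+\alpha}_{p,q})$ uniformly, rather than merely over a short window whose length one does not control a priori near $T^*$. One must be careful that the continuation time $\tau$ depends only on the size of the data in the critical space (through the small constant), not on $u(T_0)$ in a finer topology — this is exactly why the Fourier--Besov critical-space setting is used, since there the local existence time in Theorem 1.1 is dictated solely by the smallness of a scale-invariant norm. A secondary technical point is bookkeeping with the $\tilde L^\rho_T$ (Chemin--Lerner) norms: one needs the smoothing estimate and the bilinear estimate in the time-integrated Besov norm, and must check that restricting the time interval only decreases these norms, so that the smallness on $[T_0, T^*)$ genuinely feeds the fixed point. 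Once these are in place, the contradiction closes routinely.
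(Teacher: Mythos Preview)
Your contradiction strategy and the first half of the argument---choosing $T_0$ so that $\|u\|_{\tilde L^1([T_0,T^*);\dot{FB}^{\beta+\alpha}_{p,q})}$ is small, and using the $\tilde L^\infty\times\tilde L^1$ bilinear estimate together with Lemma~\ref{transport} to bound $\|u\|_{L^\infty([0,T^*);\dot{FB}^\beta_{p,q})}$---match the paper exactly. The continuation step, however, is genuinely different. The paper does not restart the fixed point from $u(T_0)$. Instead it shows that $u(t)$ is Cauchy in $\dot{FB}^\beta_{p,q}$ as $t\nearrow T^*$: one writes $u(t_2)-u(t_1)=L_1+L_2+L_3$ (difference of free evolutions plus two Duhamel pieces), bounds each term in $\dot{FB}^\beta_{p,q}$ by quantities dominated blockwise in $j$, and applies dominated convergence. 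This produces a limit $u^*\in\dot{FB}^\beta_{p,q}$; then the \emph{local} existence theorem applied with data $u^*$ at time $T^*$ immediately gives a solution on $[T^*,T^*+\tau)$ for some $\tau>0$, contradicting maximality.

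Your alternative route---restart from $T_0<T^*$ and argue the contraction extends past $T^*$---can be made to work, but it has a gap exactly where you flag ``the main obstacle.'' Smallness of $e^{-(t-T_0)\Lambda^\alpha}u(T_0)$ in $\tilde L^1([T_0,T^*);\dot{FB}^{\beta+\alpha}_{p,q})$ only lets the contraction run on $[T_0,T^*)$, which you already have; it does \emph{not} by itself yield $T_0+\tau>T^*$. The missing step is: since $u(T_0)\in\dot{FB}^\beta_{p,q}$, the linear smoothing estimate puts the free evolution in $\tilde L^1([T_0,\infty);\dot{FB}^{\beta+\alpha}_{p,q})$ with finite norm, so by dominated convergence in $j$ its norm over $[T^*,T^*+\delta)$ is as small as you like for $\delta$ small, and the interval can be enlarged to $[T_0,T^*+\delta)$. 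With that said, your argument closes. The paper's Cauchy-limit approach buys simplicity here: once $u^*$ exists, any positive existence time furnished by the local theorem (for arbitrary data) suffices, and one never has to track how the contraction interval depends on the free-evolution norm.
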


 \section{Preliminaries}
 Let us introduce some basic knowledge on Littlewood-Paley theory and Fourier-Besov spaces.
 Let $\varphi\in C_c^{\infty}(\mathbb{R}^n)$ be a radial positive function such that
    $$supp\ \varphi\subset\{\xi\in \mathbb{R}^n:\frac34\leq|\xi|\leq\frac83\},\quad
      \sum_{j\in Z}\varphi(2^{-j}\xi)=1\ \text{for any}\ \xi\neq0.$$
 Define the frequency localization operators as follows:
    \begin{align*}
          \triangle_ju=\varphi_j(D)u;\ \  S_ju=\psi_j(D)u,
    \end{align*}
here $\varphi_{_j}(\xi)=\varphi(2^{-j}\xi)$ and $\psi_j=\sum_{k\leq j-1}\varphi_{_j}$.

By Bony's decomposition we can split the product $uv$ into three parts:
\begin{align*}
          uv=T_uv+T_vu+R(u,v),
    \end{align*}
with
\begin{align*}
         T_uv=\sum_jS_{j-1}u\Delta_jv , \ \ R(u,v)=\sum_j\Delta_ju\tilde{\Delta}_jv,\ \ \tilde{\Delta}_jv=\Delta_{j-1}v+\Delta_jv+\Delta_{j+1}v.
    \end{align*}
Let us now define the Fourier-Besov space as follows.
\begin{definition}\label{definition21}
     For ~$\beta\in \mathbb{R},p,q\in[1,\infty]$, we define the Fourier-Besov space $\dot{FB}_{p,q}^\beta$ as
        \begin{align*}
          \dot{FB}_{p,q}^\beta=\{f\in \mathscr{S}'/\mathbb{P}:\|f\|_{\dot{FB}_{p,q}^\beta}
                         =\Big(\sum_{j\in\mathbb{Z}}2^{j\beta q}\|\varphi_j\hat{f}\|_{L^p}^q\Big)^{1/q}<\infty\}.
        \end{align*}
     Here the norm changes normally when $p=\infty$ or $q=\infty$, and $\mathbb{P}$ is the set of all polynomials.
\end{definition}
\begin{definition}\label{definition22}
 In this paper, we need two kinds of mixed time-space norm defined as follows: For $s\in \mathbb{R},1\leq p,q\leq\infty,I=[0,T),T\in(0,\infty]$,
 and let X be a Banach space with norm $\|\cdot\|_X$,
    \begin{align*}
     \|f(t,x)\|_{L^r(I;X)}&:=\big(\int_I\|f(\tau,\cdot)\|^r_{X}d\tau\big)^{\frac1r},  \\
     \|f(t,x)\|_{\tilde{L}^r(I;\dot{FB}_{p,q}^\beta)}&:=\big(\sum_{j\in\mathbb{Z}}2^{j\beta q}\|\varphi_j\hat{f}\|^q_{L^r(I;L^p)}\big)^{\frac1q}.
    \end{align*}
 By Minkowski' inequality, there holds
    \begin{align}\label{estimate21}
     L^r(I;F\dot{B}_{p,q}^s)\hookrightarrow \tilde{L}(I;F\dot{B}_{p,q}^s),\ \text{if}\ r\leq q
     \quad\text{and}\quad \tilde{L}^r(I;F\dot{B}_{p,q}^s)\hookrightarrow L^r(I;F\dot{B}_{p,q}^s),\ \text{if}\ r\geq q.
    \end{align}
\end{definition}
\begin{lemma}\label{absolution02}
        Let $X$ be a Banach space with norm $\|\cdot\|_X$ and $B:X\times X\mapsto X$ be a bounded
        bilinear operator satisfying
            $$\|B(u,v)\|_X\leq\eta\|u\|_X\|v\|_X,$$
        for all $u,v\in X$ and a constant $\eta>0$. Then for any fixed $y\in X$ satisfying $\|y\|_X<\epsilon<\frac{1}{4\eta}$,
        the equation $x:=y+B(x,x)$ has a solution $\bar{x}$ in $X$ such that $\|\bar{x}\|_X\leq2\|y\|_X$. Also, the solution
        is unique in $\bar{B}(0,2\epsilon)$. Moreover, the solution depends continuously on $y$ in the sense: if $\|y'\|_X<\epsilon,x'=y'+B(x',x'),\|x'\|_X<2\epsilon$, then
            $$\|\bar{x}-x'\|_X\leq\frac{1}{1-4\epsilon\eta}\|y-y'\|_X.$$
    \end{lemma}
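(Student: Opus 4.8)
The plan is to realize $\bar{x}$ as the fixed point of the map $\Phi(x):=y+B(x,x)$ by a standard Picard iteration carried out inside the closed ball $\bar{B}(0,2\epsilon)\subset X$, and to extract all three conclusions (existence with the bound $\|\bar{x}\|_X\le 2\|y\|_X$, uniqueness, and Lipschitz dependence on $y$) from the single bilinear estimate $\|B(u,v)\|_X\le\eta\|u\|_X\|v\|_X$ together with the smallness hypothesis $\|y\|_X<\epsilon<\frac{1}{4\eta}$. First I would define the sequence $x_0:=y$, $x_{n+1}:=y+B(x_n,x_n)$ and establish by induction the a priori bound $\|x_n\|_X\le 2\|y\|_X$. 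The base case is immediate; in the inductive step the bilinear estimate gives $\|x_{n+1}\|_X\le\|y\|_X+\eta\|x_n\|_X^2\le\|y\|_X(1+4\eta\|y\|_X)$, and since $4\eta\|y\|_X<4\eta\epsilon<1$ the right-hand side stays below $2\|y\|_X$, closing the induction.

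Next I would show that $\{x_n\}$ is Cauchy. Writing $B(x_n,x_n)-B(x_{n-1},x_{n-1})=B(x_n,x_n-x_{n-1})+B(x_n-x_{n-1},x_{n-1})$ and using bilinearity together with the a priori bound yields
\begin{align*}
\|x_{n+1}-x_n\|_X\le\eta\big(\|x_n\|_X+\|x_{n-1}\|_X\big)\|x_n-x_{n-1}\|_X\le 4\eta\|y\|_X\,\|x_n-x_{n-1}\|_X.
\end{align*}
Because the contraction factor $k:=4\eta\|y\|_X<1$, iterating shows $\{x_n\}$ is Cauchy, hence convergent in the complete space $X$ to some $\bar{x}$ with $\|\bar{x}\|_X\le 2\|y\|_X$. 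Since a bounded bilinear map is continuous, $B(x_n,x_n)\to B(\bar{x},\bar{x})$, so passing to the limit in the recursion gives $\bar{x}=y+B(\bar{x},\bar{x})$, the desired solution.

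For uniqueness in $\bar{B}(0,2\epsilon)$ I would take two solutions $x,x'$ there and subtract, obtaining $x-x'=B(x,x-x')+B(x-x',x')$, whence $\|x-x'\|_X\le\eta(\|x\|_X+\|x'\|_X)\|x-x'\|_X\le 4\eta\epsilon\,\|x-x'\|_X$; as $4\eta\epsilon<1$ this forces $x=x'$. The continuous dependence is the same computation with an inhomogeneity: from $\bar{x}-x'=(y-y')+B(\bar{x},\bar{x}-x')+B(\bar{x}-x',x')$ and $\|\bar{x}\|_X,\|x'\|_X<2\epsilon$ one gets $\|\bar{x}-x'\|_X\le\|y-y'\|_X+4\eta\epsilon\,\|\bar{x}-x'\|_X$, and rearranging yields $\|\bar{x}-x'\|_X\le(1-4\epsilon\eta)^{-1}\|y-y'\|_X$.

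There is no serious obstacle here; the only point demanding care is bookkeeping, namely keeping the three absorption constants genuinely below $1$: the factor $4\eta\|y\|_X$ governing the Cauchy step, and the factor $4\eta\epsilon$ governing both uniqueness and continuous dependence. Each is controlled precisely by the hypothesis $\epsilon<\frac{1}{4\eta}$. One could equivalently phrase the existence part through the Banach fixed point theorem, by checking that $\Phi$ maps $\bar{B}(0,2\epsilon)$ into itself and is a contraction there, but the explicit iteration has the advantage of delivering the sharp bound $\|\bar{x}\|_X\le 2\|y\|_X$ directly.
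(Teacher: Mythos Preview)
Your argument is correct and complete. Note, however, that the paper does not actually supply a proof of this lemma: it is stated as a standard abstract fixed-point result and used as a black box in the well-posedness argument, so there is no ``paper's own proof'' to compare against. Your Picard iteration is exactly the classical proof one would expect for this statement.
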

\begin{lemma}\label{interpolation01}\cite{xiao01} If $\frac{1}{r}=\frac{1-\theta}{r_1}+\frac{1-\theta}{r_1}$, then
    \[
      \|u\|_{\tilde{L}^r_T(\dot{FB}_{p,q}^{\beta+\theta \alpha})}
      \leq   \|u\|^{1-\theta}_{\tilde{L}^{r_1}_T(\dot{FB}_{p,q}^{\beta})}
             \|u\|^{\theta}_{\tilde{L}^{r_2}_T(\dot{FB}_{p,q}^{\beta+\alpha})}.
    \]
\end{lemma}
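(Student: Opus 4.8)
The final statement to prove is Lemma 2.3 (the interpolation inequality), which reads
\[
  \|u\|_{\tilde{L}^r_T(\dot{FB}_{p,q}^{\beta+\theta \alpha})}
  \leq   \|u\|^{1-\theta}_{\tilde{L}^{r_1}_T(\dot{FB}_{p,q}^{\beta})}
         \|u\|^{\theta}_{\tilde{L}^{r_2}_T(\dot{FB}_{p,q}^{\beta+\alpha})},
\]
under the exponent relation $\frac1r=\frac{1-\theta}{r_1}+\frac{\theta}{r_2}$ with $\theta\in[0,1]$. (I read the displayed hypothesis $\frac{1}{r}=\frac{1-\theta}{r_1}+\frac{1-\theta}{r_1}$ as a typo for this standard Hölder balance.)

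\medskip

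\textbf{The plan.} The inequality unravels into a product estimate at the level of each dyadic block followed by a discrete H\"older summation in the frequency index $j$. First I would write out, for each fixed $j\in\mathbb{Z}$, the single-block time norm $\|\varphi_j\hat u\|_{L^r(I;L^p)}$ and split the $\dot{FB}^{\beta+\theta\alpha}$ weight as $2^{j(\beta+\theta\alpha)}=\bigl(2^{j\beta}\bigr)^{1-\theta}\bigl(2^{j(\beta+\alpha)}\bigr)^{\theta}$, so that the weight factorizes exactly in the way the two target norms demand. The core of the argument is then the pointwise-in-$j$ estimate
\[
  \|\varphi_j\hat u\|_{L^r(I;L^p)}
  \leq \|\varphi_j\hat u\|^{1-\theta}_{L^{r_1}(I;L^p)}\,
       \|\varphi_j\hat u\|^{\theta}_{L^{r_2}(I;L^p)},
\]
which is the ordinary interpolation (H\"older) inequality for the Bochner space $L^r(I;L^p)$ with exponent relation $\frac1r=\frac{1-\theta}{r_1}+\frac{\theta}{r_2}$, applied to the fixed function $t\mapsto\varphi_j\hat u(t,\cdot)\in L^p$.

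\medskip

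\textbf{Key steps in order.} I would proceed as follows. (1) Fix $j$ and set $a_j(t)=\|\varphi_j\hat u(t,\cdot)\|_{L^p}$, a nonnegative measurable function of $t$ on $I$; the inner $L^p$ norm is taken first, reducing everything to scalar-valued functions of $t$. (2) Apply H\"older's inequality in time to $\int_I a_j(t)^r\,dt=\int_I a_j^{r(1-\theta)}\,a_j^{r\theta}\,dt$ with conjugate exponents $\frac{r_1}{r(1-\theta)}$ and $\frac{r_2}{r\theta}$, whose reciprocals sum to $1$ precisely by the exponent hypothesis; this yields the single-block interpolation displayed above. (3) Multiply through by $2^{j(\beta+\theta\alpha)}$ and insert the factorized weight, obtaining
\[
  2^{j(\beta+\theta\alpha)}\|\varphi_j\hat u\|_{L^r(I;L^p)}
  \leq \Bigl(2^{j\beta}\|\varphi_j\hat u\|_{L^{r_1}(I;L^p)}\Bigr)^{1-\theta}
       \Bigl(2^{j(\beta+\alpha)}\|\varphi_j\hat u\|_{L^{r_2}(I;L^p)}\Bigr)^{\theta}.
\]
(4) Raise to the power $q$, sum over $j\in\mathbb{Z}$, and apply the discrete H\"older inequality for $\ell^q$ sequences with the same conjugate pair of exponents $\bigl(\tfrac{1}{1-\theta},\tfrac{1}{\theta}\bigr)$ acting on the two bracketed sequences; this converts the summed product into the product of the two $\ell^q$-weighted sums, which are exactly $\|u\|^{q(1-\theta)}_{\tilde L^{r_1}_T(\dot{FB}^{\beta}_{p,q})}$ and $\|u\|^{q\theta}_{\tilde L^{r_2}_T(\dot{FB}^{\beta+\alpha}_{p,q})}$. (5) Take the $q$-th root to conclude. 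The endpoint cases $\theta\in\{0,1\}$ and $q=\infty$ (where the $\ell^q$ sum is replaced by a supremum) are handled by the usual conventions and require only minor notational adjustment.

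\medskip

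\textbf{Main obstacle.} There is no deep analytic difficulty here; the only thing to be careful about is the \emph{double} application of H\"older with \emph{matching} exponent pairs—once in the time integral (step 2) and once in the frequency sum (step 4)—and verifying that both conjugacy conditions are simultaneously satisfied by the single balance $\frac1r=\frac{1-\theta}{r_1}+\frac{\theta}{r_2}$. One must check that $r(1-\theta)\le r_1$ and $r\theta\le r_2$ so the time-H\"older exponents are admissible (these follow from the balance and $0\le\theta\le1$), and that the $\ell^q$-H\"older step does not require any further relation among $p,q$—indeed it does not, since the weight splitting is an exact algebraic identity independent of $q$. The degenerate cases $r_i=\infty$ or $\theta\in\{0,1\}$ should be stated explicitly but are immediate. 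Since the statement cites \cite{xiao01}, I would also remark that the proof is the verbatim Fourier-Besov analogue of the classical Chemin--Lerner interpolation, so the argument above is self-contained and complete.
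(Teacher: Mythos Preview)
Your proof is correct and is exactly the standard Chemin--Lerner interpolation argument: H\"older in time at each fixed dyadic block, then the algebraic weight splitting $2^{j(\beta+\theta\alpha)}=(2^{j\beta})^{1-\theta}(2^{j(\beta+\alpha)})^{\theta}$, then H\"older in $\ell^q$ over $j$. The paper itself gives no proof of this lemma---it simply cites \cite{xiao01}---so there is no ``paper's own proof'' to compare against; your self-contained argument is precisely what one would expect to find in the cited reference, and your reading of the hypothesis as a typo for $\tfrac{1}{r}=\tfrac{1-\theta}{r_1}+\tfrac{\theta}{r_2}$ is correct.
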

Now we prove a priori estimate which will be used in our proof. Consider the following dissipative equation:
    \begin{align}\label{equation22}
     \partial_t u+\Lambda^\alpha u=f(t,x),\quad u(0,x)=u_0(x),\quad t>0,x\in \mathbb{R}^n.
    \end{align}
  \begin{lemma}\label{transport}
    Let $0<T\leq\infty,\beta\in \mathbb{R}$ and $1\leq r\leq \infty$. Assume $u_0\in\dot{FB}_{p,q}^\beta$, $f\in \tilde{L}^1(I;\dot{FB}_{p,q}^\beta)$. Then
    the solution $u(t,x)$ to (2.2) satisfies
    \begin{align}
    \|u\|_{\tilde{L}^r(I;\dot{FB}_{p,q}^{\beta+\frac{\alpha}{r}})}
    \leq C(\|u_0\|_{\dot{FB}_{p,q}^\beta}+\|f\|_{\tilde{L}^1(I;\dot{FB}_{p,q}^\beta)})
    \end{align}
  \end{lemma}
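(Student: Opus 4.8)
The plan is to reduce the estimate to a frequency-localized scalar ODE and then apply Young's inequality in the time variable. First I would apply the dyadic block $\triangle_j=\varphi_j(D)$ to equation (\ref{equation22}); since $\triangle_j$ commutes with $\partial_t$ and with $\Lambda^\alpha$, and the Fourier transform turns $\Lambda^\alpha$ into multiplication by $|\xi|^\alpha$, the function $\varphi_j(\xi)\hat u(t,\xi)=\widehat{\triangle_j u}(t,\xi)$ solves
\begin{align*}
 \partial_t\big(\varphi_j\hat u\big)+|\xi|^\alpha\varphi_j\hat u=\varphi_j\hat f .
\end{align*}
Duhamel's formula then gives the representation
\begin{align*}
 \varphi_j(\xi)\hat u(t,\xi)=e^{-t|\xi|^\alpha}\varphi_j(\xi)\hat u_0(\xi)
 +\int_0^t e^{-(t-\tau)|\xi|^\alpha}\varphi_j(\xi)\hat f(\tau,\xi)\,d\tau .
\end{align*}

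Next I would exploit the spectral localization: on $\operatorname{supp}\varphi_j$ one has $|\xi|\ge\frac34 2^j$, so there is a constant $c=(3/4)^\alpha>0$ with $e^{-t|\xi|^\alpha}\varphi_j(\xi)\le e^{-c2^{j\alpha}t}\varphi_j(\xi)$ pointwise. Taking the $L^p_\xi$ norm of the Duhamel formula and moving the norm inside the time integral by Minkowski's inequality yields, for every $t\in I$,
\begin{align*}
 \|\varphi_j\hat u(t)\|_{L^p}\le e^{-c2^{j\alpha}t}\|\varphi_j\hat u_0\|_{L^p}
 +\int_0^t e^{-c2^{j\alpha}(t-\tau)}\|\varphi_j\hat f(\tau)\|_{L^p}\,d\tau .
\end{align*}
Then I would take the $L^r$ norm in $t$ over $I$. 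For the first term, $\|e^{-c2^{j\alpha}\cdot}\|_{L^r(I)}\le\|e^{-c2^{j\alpha}\cdot}\|_{L^r(0,\infty)}=(cr\,2^{j\alpha})^{-1/r}\le C2^{-j\alpha/r}$. For the second term, the right-hand side is the time convolution of $s\mapsto e^{-c2^{j\alpha}s}$ (extended by $0$ for $s<0$) with $\tau\mapsto\|\varphi_j\hat f(\tau)\|_{L^p}$, so Young's inequality $\|g*h\|_{L^r}\le\|g\|_{L^r}\|h\|_{L^1}$ bounds it by $C2^{-j\alpha/r}\|\varphi_j\hat f\|_{L^1(I;L^p)}$; the endpoint cases $r=1$ and $r=\infty$ are included with the obvious reading of the norms. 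Hence
\begin{align*}
 \|\varphi_j\hat u\|_{L^r(I;L^p)}\le C\,2^{-j\alpha/r}\Big(\|\varphi_j\hat u_0\|_{L^p}+\|\varphi_j\hat f\|_{L^1(I;L^p)}\Big).
\end{align*}

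Finally, multiplying this inequality by $2^{j(\beta+\alpha/r)}$ cancels the factor $2^{-j\alpha/r}$ and leaves $2^{j\beta}\|\varphi_j\hat u_0\|_{L^p}$ and $2^{j\beta}\|\varphi_j\hat f\|_{L^1(I;L^p)}$ on the right. Taking the $\ell^q(\mathbb{Z})$ norm in $j$ and recalling Definitions \ref{definition21} and \ref{definition22} gives exactly the claimed bound, with $C=C(\alpha,r)$. I do not expect a genuine obstacle here: this is the standard gain-of-regularity computation for the fractional heat semigroup. The only points needing a little care are keeping the lower frequency bound on $\operatorname{supp}\varphi_j$ explicit (so that the symbol $e^{-t|\xi|^\alpha}$ really contributes decay exponential in $2^{j\alpha}t$ on the $j$-th block), and performing all estimates at the level of a single dyadic block and summing in $\ell^q$ only at the very end — which is precisely why the $\tilde L^r$ time-space norm, rather than $L^r(I;\dot{FB}_{p,q}^\beta)$, is the natural object on the left.
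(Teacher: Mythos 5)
Your argument is correct and is essentially the paper's own proof: both split $u$ into the semigroup part and the Duhamel integral, use the spectral localization $|\xi|\ge\frac34 2^j$ on $\operatorname{supp}\varphi_j$ to get the decay factor $e^{-c2^{j\alpha}t}$, bound the time integral by Young's convolution inequality to produce the $2^{-j\alpha/r}$ gain, and only then sum in $\ell^q$. No gap; the computation $\|e^{-c2^{j\alpha}\cdot}\|_{L^r(0,\infty)}=(cr2^{j\alpha})^{-1/r}$ and the endpoint cases are handled as in the paper.
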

\begin{proof}
Consider the integral form of (\ref{equation22})
   \[
     u(t,x)=e^{-t\Lambda^{\alpha}}u_0+\int_0^t e^{-(t-\tau)\Lambda^{\alpha}}f(\tau,x){\rm d}\tau:=Lu_0+Gf.
   \]
For the linear part,
   \begin{align*}
    \|\varphi_j \mathcal{F}(Lu_0)\|_{L^p}
    \leq e^{-t2^{j\alpha}(3/4)^{\alpha}}\|\varphi_j \widehat{u_0}\|_{L^p}.
   \end{align*}
Hence
   \begin{align*}
     \|Lu_0\|_{\tilde{L}_T^r(\dot{FB}_{p,q}^{\beta+\frac{\alpha}{r}})}
     \leq \big\|2^{j(\beta+\frac{\alpha}{r})}\|e^{-t2^{j\alpha}(3/4)^{\alpha}}\|_{L_T^r}\|\varphi_j \widehat{u_0}\|_{L^p}\big\|_{l^q}
     \leq C\|u_0\|_{\dot{FB}_{p,q}^{\beta}}.
   \end{align*}
On the other hand, for the integral part,
   \begin{align*}
     \|\varphi_j\mathcal{F}(Gf)\|_{L^p}
     \leq \int_0^t e^{-(t-\tau)(\frac34 2^j)^{\alpha}}\|\varphi_j\hat{f}\|_{L^p}{\rm d}\tau
   \end{align*}
Taking $L^r$-norm with respect to time, by Minkowskii's inequality
      \begin{align*}
     \|\varphi_j\mathcal{F}(Gf)\|_{L_T^r(L^p)}
     \leq C2^{-\frac{j\alpha}{r}}\|\varphi_j\hat{f}\|_{L_T^1(L^p)}.
   \end{align*}
Hence
     \begin{align*}
      \|Gf\|_{\tilde{L}_T^r(\dot{FB}_{p,q}^{\beta+\frac{\alpha}{r}})}
      = \big\|2^{j(\beta+\frac{\alpha}{r})}\|\varphi_j\mathcal{F}(Gf)\|_{L_T^r(L^p)}\big\|_{l^q}
      \leq C\|f\|_{\tilde{L}_T^1(\dot{FB}_{p,q}^{\beta})}.
     \end{align*}
Combine the above estimates, we obtain our desire inequality.
\end{proof}

\section{Local and global well-posedness}
  In this section we prove our main Theorem.
 First we know that the integral form of $u$ is as follows
  \begin{align}\label{equation31}
  \begin{split}
        u&=e^{-t\Lambda^\alpha}u_0+\int_0^te^{-(t-\tau)\Lambda^\alpha}\nabla\cdot(u(\tau)\nabla Pu(\tau)){\rm d}\tau\\
        &:=S(t)u_0+H(u,u).
    \end{split}
  \end{align}
  Here $S(t)u_0=\mathcal{F}^{-1}(e^{-t|\xi|^\alpha}\hat{u}_0)$, and 
  $H(u.v)=\int_0^te^{-(t-\tau)\Lambda^\alpha}\nabla\cdot(u(\tau)\nabla Pv(\tau)){\rm d}\tau$.
Now we get the following estimate
\begin{proposition}\label{propositon31}
       Let $\gamma,p,q\geq 1$, $\epsilon>\max\{0,-\sigma\}$, $\beta>0$, $\frac{1}{\gamma}=\frac{1}{\gamma_1}+\frac{1}{\gamma_2}$,
       there holds
       \begin{align*}
\|u\partial_iPv\|_{\tilde{L}^{\gamma}_t(\dot{FB}^{\beta}_{p,q})}
&\leq C\|u\|_{\tilde{L}^{\gamma_1}_t(\dot{FB}^{n(1-\frac 1p)-\epsilon}_{p,q})}\|v\|_{\tilde{L}^{\gamma_2}_t(\dot{FB}^{\beta+\sigma+\epsilon}_{p,q})}\\
&+C\|v\|_{\tilde{L}^{\gamma_1}_t(\dot{FB}^{n(1-\frac 1p)-\epsilon}_{p,q})}\|u\|_{\tilde{L}^{\gamma_2}_t(\dot{FB}^{\beta+\sigma+\epsilon}_{p,q})}.
\end{align*}
   \end{proposition}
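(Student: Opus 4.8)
The plan is to run Bony's paraproduct decomposition
\[
 u\,\partial_iPv=T_u(\partial_iPv)+T_{\partial_iPv}u+R(u,\partial_iPv)
\]
and bound the three pieces separately in $\tilde L^\gamma_t(\dot{FB}^\beta_{p,q})$. Three ingredients will be used repeatedly. First, the abstract pressure bound \eqref{estimate11}: since $\partial_iP$ is a single component of $\nabla P$, one has $|\widehat{\Delta_k(\partial_iPv)}|\le|\widehat{\Delta_k(\nabla Pv)}|$ pointwise and hence $\|\widehat{\Delta_k(\partial_iPv)}\|_{L^p}\le C2^{k\sigma}\|\widehat{\Delta_kv}\|_{L^p}$. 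Second, a Bernstein-type inequality on the Fourier side: $\widehat{\Delta_jf}$ is supported in an annulus of measure $\sim 2^{jn}$ and $\widehat{S_jf}$ in a ball of radius $\sim 2^j$, so H\"older gives $\|\widehat{\Delta_jf}\|_{L^1}\le C2^{jn(1-\frac1p)}\|\widehat{\Delta_jf}\|_{L^p}$. Third, Young's inequality in the frequency variable (a product becomes a convolution under $\mathcal F$), discrete Young for the ensuing $\ell^q$ sums, and Minkowski/H\"older in time, which together turn the $L^\gamma_t$-norm of a pointwise-in-$t$ product into the product of an $L^{\gamma_1}_t$ and an $L^{\gamma_2}_t$ norm. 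I write $N_0$ for the fixed integer governing the frequency overlaps in the decomposition.

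For $T_u(\partial_iPv)=\sum_jS_{j-1}u\,\Delta_j(\partial_iPv)$ each summand has spectrum in an annulus $\sim 2^j$, so $\varphi_k\widehat{T_u(\partial_iPv)}$ only sees $|j-k|\le N_0$; Young in $\xi$ gives, for each $t$, $\|\varphi_k\widehat{T_u(\partial_iPv)}\|_{L^p}\lesssim\sum_{|j-k|\le N_0}\|\widehat{S_{j-1}u}\|_{L^1}\|\widehat{\Delta_j(\partial_iPv)}\|_{L^p}$. Taking $L^{\gamma_1}_t(L^1)$ of the first factor, Minkowski over the low-frequency sum and Bernstein yield $\|\widehat{S_{j-1}u}\|_{L^{\gamma_1}_t(L^1)}\lesssim\sum_{l\le j-2}2^{l\epsilon}c_l$ with $c_l:=2^{l(n(1-\frac1p)-\epsilon)}\|\widehat{\Delta_lu}\|_{L^{\gamma_1}_t(L^p)}$ and $\|(c_l)\|_{\ell^q}=\|u\|_{\tilde L^{\gamma_1}_t(\dot{FB}^{n(1-\frac1p)-\epsilon}_{p,q})}$; since $\epsilon>0$, $2^{-j\epsilon}\sum_{l\le j-2}2^{l\epsilon}c_l$ is the convolution of $(c_l)$ with a summable one-sided geometric kernel, hence an $\ell^q$ sequence of comparable norm. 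Combining this with $\|\widehat{\Delta_j(\partial_iPv)}\|_{L^{\gamma_2}_t(L^p)}\lesssim 2^{j\sigma}\|\widehat{\Delta_jv}\|_{L^{\gamma_2}_t(L^p)}$, using $2^{k\beta}\sim 2^{j\beta}$ on $|j-k|\le N_0$, H\"older in $t$ with $\frac1\gamma=\frac1{\gamma_1}+\frac1{\gamma_2}$, and then the $\ell^q_k$-norm, gives $\|T_u(\partial_iPv)\|_{\tilde L^\gamma_t(\dot{FB}^\beta_{p,q})}\lesssim\|u\|_{\tilde L^{\gamma_1}_t(\dot{FB}^{n(1-\frac1p)-\epsilon}_{p,q})}\|v\|_{\tilde L^{\gamma_2}_t(\dot{FB}^{\beta+\sigma+\epsilon}_{p,q})}$.

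The remainder $R(u,\partial_iPv)=\sum_j\Delta_ju\,\tilde\Delta_j(\partial_iPv)$ is handled the same way, except that each summand now has spectrum in a ball of radius $\sim 2^j$, so $\varphi_k\widehat R$ collects all $j\ge k-N_0$; after Young in $\xi$, Bernstein on $\widehat{\Delta_ju}$ (cost $2^{jn(1-\frac1p)}$, i.e. $2^{j\epsilon}$ in the $\dot{FB}^{n(1-\frac1p)-\epsilon}$ normalisation) and the pressure estimate on $\tilde\Delta_j(\partial_iPv)$, one arrives at $\big\|\sum_{j\ge k-N_0}2^{(k-j)\beta}g_j\big\|_{\ell^q_k}$, where $(g_j)$ is a product of the relevant $\ell^q$ sequences built from $u$ and $v$; the kernel $2^{(k-j)\beta}\mathbf 1_{k-j\le N_0}$ is $\ell^1$ precisely because $\beta>0$, so discrete Young (and H\"older in $t$) closes the estimate, producing the first product on the right-hand side (distributing the two Lebesgue exponents the other way gives the second). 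For $T_{\partial_iPv}u=\sum_jS_{j-1}(\partial_iPv)\,\Delta_ju$ the summand again has spectrum in an annulus $\sim 2^j$, and the new point is the low-frequency factor $\|\widehat{S_{j-1}(\partial_iPv)}\|_{L^{\gamma_1}_t(L^1)}$: block by block, Bernstein followed by the pressure estimate gives $\lesssim\sum_{l\le j-2}2^{l(\sigma+\epsilon)}c_l$ with $\|(c_l)\|_{\ell^q}=\|v\|_{\tilde L^{\gamma_1}_t(\dot{FB}^{n(1-\frac1p)-\epsilon}_{p,q})}$, and this sum converges and behaves like $2^{j(\sigma+\epsilon)}$ times an $\ell^q$ sequence exactly because $\sigma+\epsilon>0$, i.e. $\epsilon>-\sigma$; pairing with $\|\widehat{\Delta_ju}\|_{L^{\gamma_2}_t(L^p)}$, collecting powers of $2^j$, H\"older in $t$ and the $\ell^q_k$-norm yield $\|T_{\partial_iPv}u\|_{\tilde L^\gamma_t(\dot{FB}^\beta_{p,q})}\lesssim\|v\|_{\tilde L^{\gamma_1}_t(\dot{FB}^{n(1-\frac1p)-\epsilon}_{p,q})}\|u\|_{\tilde L^{\gamma_2}_t(\dot{FB}^{\beta+\sigma+\epsilon}_{p,q})}$. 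Summing the three bounds gives the proposition.

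The routine-but-fussy part will be the bookkeeping of the time exponents: every block inequality is a pointwise-in-$t$ product, so one must invoke Minkowski (to pull $L^{\gamma_i}_t$ inside the $S_j$ sums) before H\"older in $t$ (to split $\gamma$ into $\gamma_1,\gamma_2$), together with the usual $\ell^\infty$/essential-supremum modifications in the endpoint cases $p=\infty$ or $q=\infty$. The real crux—where the hypotheses are actually consumed—is the convergence of the three one-sided geometric summations: $\epsilon>0$ for the $S_{j-1}u$ factor in $T_u$, $\sigma+\epsilon>0$ for the $S_{j-1}(\partial_iPv)$ factor in $T_{\partial_iPv}u$, and $\beta>0$ for the high-frequency tail in $R$. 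Recognising that these are precisely the conditions that make all three paraproduct pieces close is the heart of the argument.
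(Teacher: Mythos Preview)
Your proof is correct and follows essentially the same route as the paper: Bony's paraproduct decomposition, Young's inequality on the Fourier side together with the Bernstein-type bound $\|\widehat{\Delta_jf}\|_{L^1}\lesssim 2^{jn(1-\frac1p)}\|\widehat{\Delta_jf}\|_{L^p}$ and the pressure estimate \eqref{estimate11}, followed by one-sided geometric summations that consume respectively $\epsilon>0$, $\sigma+\epsilon>0$, and $\beta>0$. The only cosmetic difference is that in the remainder term the paper applies Bernstein to the $v$-block rather than the $u$-block, landing directly on the second product on the right-hand side; as you note, either choice works.
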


\begin{proof}
By the Bony's decomposition, it is easy to get that
\begin{align}
\Delta_j(u\partial_iPv)&=\sum_{|k-j|\leq 4}\Delta_j(S_{k-1}u\Delta_k(\partial_iPv))\\&
+\sum_{|k-j|\leq 4}\Delta_j(S_{k-1}(\partial_iPv)\Delta_ku)+\sum_{k\geq j-2}\Delta_j({\Delta_ku\tilde{\Delta}_k(\partial_iPv)})
\end{align}
We can get the following estimates:
\begin{align*}
&2^{j\beta}\|\sum_{|k-j|\leq 4}\varphi_j\mathcal{F}(S_{k-1}u\Delta_k(\partial_iPv))\|_{L^{\gamma}_t(L^p)}\\
&\leq 2^{j\beta}\sum_{|k-j|\leq 4}\|\mathcal{F}(S_{k-1}u)\ast\varphi_k\mathcal{F}(\partial_iPv)\|_{L^{\gamma}_t(L^p)}\\
&\leq C2^{j\beta}\sum_{|k-j|\leq 4}\big\|\sum_{l\leq k-2}2^{ln(1-\frac 1p)}\|\varphi_l\hat{u}\|_{L^p}2^{k\sigma}
 \|\varphi_k\hat{v}\|_{L^p}\big\|_{L_t^{\gamma}}\\
&\leq C2^{j\beta}\sum_{|k-j|\leq 4}\big\|\sum_{l\leq k-2}2^{(l-k)\epsilon}2^{nl(1-\frac 1p)-\epsilon l}
 \|\varphi_l\hat{u}\|_{L^p}2^{k(\sigma+\epsilon)}\|\varphi_k\hat{v}\|_{L^p}\big\|_{L_t^{\gamma}}\\
&\leq C \|u\|_{\tilde{L}^{\gamma_1}_t(\dot{FB}^{n(1-\frac 1p)-\epsilon}_{p,q})}2^{j(\beta+\sigma+\epsilon)}\|\varphi_j\hat{v}\|_{L^{\gamma_2}_t(L^p)}.
\end{align*}
Similarly, we can estimate
\begin{align*}
&2^{j\beta}\|\sum_{|k-j|\leq 4}\varphi_j\mathcal{F}(S_{k-1}(\partial_iPv)\Delta_ku)\|_{L^{\gamma}_t(L^p)}\\
&\leq C\|v\|_{\tilde{L}^{\gamma_1}_t(\dot{FB}^{n(1-\frac 1p)-\epsilon}_{p,q})}2^{j(\beta+\sigma+\epsilon)}\|\varphi_k\hat{u}\|_{L^{\gamma_2}_t(L^p)}.
\end{align*}
 And when $\beta>0$, we can also get that for some $\|a_j\|_{l^q}=1$,
\begin{align*}
&2^{j\beta}\|\mathcal{F}(\sum_{k\geq j-2}\Delta_j(\Delta_k(\partial_iPv)\tilde{\Delta}_ku)\|_{L^{\gamma}_t(L^p)}\\
&\leq 2^{j\beta}\|\sum_{k\geq j-2}\varphi_j\mathcal{F}(\Delta_k(\partial_iPv))\ast(\tilde{\varphi}_k\hat{u})\|_{L^{\gamma}_t(L^p)}\\
&\leq C2^{j\beta}\big\|\sum_{k\geq j-2}2^{kn(1-\frac 1p)+k\sigma}\|\varphi_k\hat{v}\|_{L^p}
 \|\tilde{\varphi}_k\hat{u}\|_{L^p}\big\|_{L^{\gamma}_t}\\
&\leq C\sum_{k\geq j-2}2^{(j-k)\beta}\big\|2^{kn(1-\frac 1p)-\epsilon k}\|\varphi_k\hat{v}\|_{L^p}
 2^{k(\beta+\sigma+\epsilon)}\|\tilde{\varphi}_k\hat{u}\|_{L^p}\big\|_{L^{\gamma}_t}    \\
&\leq C\sum_{k\geq j-2}2^{(j-k)\beta}\|v\|_{\tilde{L}^{\gamma_1}_t(\dot{FB}^{n(1-\frac 1p)-\epsilon}_{p,q})}
 2^{k(\beta+\sigma+\epsilon)}\|\tilde{\varphi}_ku\|_{\tilde{L}^{\gamma_2}_t(L^p)}\\
&\leq C a_j\|v\|_{\tilde{L}^{\gamma_1}_t(\dot{FB}^{n(1-\frac 1p)-\epsilon}_{p,q})}\|u\|_{\tilde{L}^{\gamma_2}_t(\dot{FB}^{\beta+\sigma+\epsilon}_{p,q})}.
\end{align*}
Combine the above estimate, we obtain the following inequality
\begin{align*}
 \|u\partial_iPv\|_{\tilde{L}^{\gamma}_t(\dot{FB}^{\beta}_{p,q})}
&\leq C\|u\|_{\tilde{L}^{\gamma_1}_t(\dot{FB}^{n(1-\frac 1p)-\epsilon}_{p,q})}\|v\|_{\tilde{L}^{\gamma_2}_t(\dot{FB}^{\beta+\sigma+\epsilon}_{p,q})}\\
&+C\|v\|_{\tilde{L}^{\gamma_1}_t(\dot{FB}^{n(1-\frac 1p)-\epsilon}_{p,q})}\|u\|_{\tilde{L}^{\gamma_2}_t(\dot{FB}^{\beta+\sigma+\epsilon}_{p,q})}.
\end{align*}
\end{proof}
\begin{proposition} \label{propositon32}
       Let $p,q\geq 1$, $\epsilon>\max\{0,-\sigma\}$, $\beta>-1$. If $u,v\in \tilde{L}^{2}_T(\dot{FB}^{n(1-\frac 1p)-\epsilon}_{p,q})\cap\tilde{L}^{2}_T(\dot{FB}^{\beta+\sigma+\epsilon+1}_{p,q})$, we have
\begin{align*}
&\|H(u,v)\|_{\tilde{L}^{\infty}_T(\dot{FB}^{\beta}_{p,q})}+\|H(u,v)\|_{\tilde{L}^{1}_T(\dot{FB}^{\beta+\alpha}_{p,q})}  \\
&\leq C\|u\|_{\tilde{L}^{2}_T(\dot{FB}^{n(1-\frac 1p)-\epsilon}_{p,q})}\|v\|_{\tilde{L}^{2}_T(\dot{FB}^{\beta+\sigma+\epsilon+1}_{p,q})}
  +C\|v\|_{\tilde{L}^{2}_T(\dot{FB}^{n(1-\frac 1p)-\epsilon}_{p,q})}\|u\|_{\tilde{L}^{2}_T(\dot{FB}^{\beta+\sigma+\epsilon+1}_{p,q})}.
\end{align*}
   \end{proposition}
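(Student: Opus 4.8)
The plan is to recognize that $H(u,v)$ is nothing but the solution of the linear dissipative problem \eqref{equation22} with zero initial datum and forcing term $f=\nabla\cdot(u\nabla Pv)$, and then to chain together Lemma~\ref{transport} (the a priori estimate for \eqref{equation22}) with Proposition~\ref{propositon31}. Concretely, I would first apply Lemma~\ref{transport} with $u_0=0$ twice: once with $r=\infty$, which gives
$\|H(u,v)\|_{\tilde L^{\infty}_T(\dot{FB}^{\beta}_{p,q})}\le C\|\nabla\cdot(u\nabla Pv)\|_{\tilde L^1_T(\dot{FB}^{\beta}_{p,q})}$,
and once with $r=1$, which gives
$\|H(u,v)\|_{\tilde L^{1}_T(\dot{FB}^{\beta+\alpha}_{p,q})}\le C\|\nabla\cdot(u\nabla Pv)\|_{\tilde L^1_T(\dot{FB}^{\beta}_{p,q})}$.
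Thus both quantities on the left-hand side of the claimed inequality are controlled by a single quantity, namely $\|\nabla\cdot(u\nabla Pv)\|_{\tilde L^1_T(\dot{FB}^{\beta}_{p,q})}$.

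Next I would peel off the divergence. Since $\widehat{\partial_i F}(\xi)=i\xi_i\widehat F(\xi)$ and $|\xi_i|\le \tfrac{8}{3}2^{j}$ on $\operatorname{supp}\varphi_j$, one has $2^{j\beta}\|\varphi_j\widehat{\partial_iF}\|_{L^p}\le \tfrac{8}{3}\,2^{j(\beta+1)}\|\varphi_j\widehat F\|_{L^p}$; taking the $L^1_T(L^p)$ norm in time and then the $\ell^q$ norm in $j$ yields
$\|\nabla\cdot(u\nabla Pv)\|_{\tilde L^1_T(\dot{FB}^{\beta}_{p,q})}\le C\sum_{i}\|u\,\partial_iPv\|_{\tilde L^1_T(\dot{FB}^{\beta+1}_{p,q})}$,
the sum being finite. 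Finally I would invoke Proposition~\ref{propositon31} with the parameter there taken to be $\beta+1$ in place of $\beta$, and with $\gamma=1$, $\gamma_1=\gamma_2=2$ (so that $\tfrac1\gamma=\tfrac1{\gamma_1}+\tfrac1{\gamma_2}$). This is legitimate because the hypothesis $\beta>-1$ of the present proposition gives $\beta+1>0$, matching the requirement $\beta>0$ in Proposition~\ref{propositon31}, and the condition $\epsilon>\max\{0,-\sigma\}$ is shared. Since $(\beta+1)+\sigma+\epsilon=\beta+\sigma+\epsilon+1$, the output of Proposition~\ref{propositon31} is exactly
$\|u\,\partial_iPv\|_{\tilde L^1_T(\dot{FB}^{\beta+1}_{p,q})}\le C\|u\|_{\tilde L^{2}_T(\dot{FB}^{n(1-\frac1p)-\epsilon}_{p,q})}\|v\|_{\tilde L^{2}_T(\dot{FB}^{\beta+\sigma+\epsilon+1}_{p,q})}+C\|v\|_{\tilde L^{2}_T(\dot{FB}^{n(1-\frac1p)-\epsilon}_{p,q})}\|u\|_{\tilde L^{2}_T(\dot{FB}^{\beta+\sigma+\epsilon+1}_{p,q})}$,
which is precisely the bound needed; the hypothesis $u,v\in \tilde L^2_T(\dot{FB}^{n(1-\frac1p)-\epsilon}_{p,q})\cap\tilde L^2_T(\dot{FB}^{\beta+\sigma+\epsilon+1}_{p,q})$ guarantees the right-hand side is finite. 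Combining the three displayed estimates finishes the proof.

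I do not expect a genuine obstacle here, since the analytic heart of the matter — the paraproduct bookkeeping for $u\,\partial_iPv$ and the smoothing estimate for $e^{-t\Lambda^\alpha}$ — is already isolated in Proposition~\ref{propositon31} and Lemma~\ref{transport}. The only points requiring a little care are: (i) verifying that $H(u,v)$ legitimately solves \eqref{equation22} so that Lemma~\ref{transport} applies with $r=\infty$ as well as $r=1$; (ii) the elementary but necessary observation that $\nabla\cdot$ maps $\dot{FB}^{\beta+1}_{p,q}$ boundedly into $\dot{FB}^{\beta}_{p,q}$ (uniformly in the time integrability index); and (iii) the consistency check of the parameter ranges when Proposition~\ref{propositon31} is used at regularity $\beta+1$. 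None of these is deep, so the proof is essentially a two-line composition once these bookkeeping items are stated.
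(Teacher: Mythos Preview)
Your proposal is correct and follows exactly the paper's approach: recognize $H(u,v)$ as the solution of \eqref{equation22} with $u_0=0$ and $f=\nabla\cdot(u\nabla Pv)$, apply Lemma~\ref{transport} with $r=\infty$ and $r=1$, and then invoke Proposition~\ref{propositon31}. You have simply made explicit the divergence-peeling step and the parameter check (using $\beta+1>0$ in Proposition~\ref{propositon31}) that the paper leaves to the reader.
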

   \begin{proof}
   We note that $H(u,v)$ is a solution to equation (\ref{equation22}) with $u_0=0,f=\nabla \cdot (u\nabla Pv)$.
So by Lemma \ref{transport} there holds
\begin{align*}
\|H(u,v)\|_{\tilde{L}^{\infty}_T(\dot{FB}^{\beta}_{p,q})}+\|H(u,v)\|_{\tilde{L}^{1}_T(\dot{FB}^{\beta+\alpha}_{p,q})}
\leq C\|\nabla \cdot (u\nabla Pv)\|_{\tilde{L}_T^1(\dot{FB}_{p,q}^{\beta})}.
\end{align*}
Then Proposition \ref{propositon31} gives the estimate.
\end{proof}
\begin{theorem}\label{theorem31}
Let $p,q\in[1,\infty]$, $2\max\{1,\sigma+1\}<\alpha<n(1-\frac 1p)+\sigma+2$. Then for any $u_0\in\dot{FB}^{n(1-\frac 1p)-\alpha+\sigma+1}_{p,q}$,
equation (1.1) admits a unique mild solution $u$ and
 $$u\in C\big([0,T);\dot{FB}^{n(1-\frac 1p)-\alpha+\sigma+1}_{p,q}\big)\cap \tilde{L}^{1}([0,T);\dot{FB}^{n(1-\frac 1p)+\sigma+1}_{p,q}).$$
 Moreover, there exists a constant $C_0=C_0(\alpha,p,q)$ such that for $u_0$ satisfying $\|u_0\|_{\dot{FB}^{n(1-\frac 1p)-\alpha+\sigma+1}_{p,q}}\leq C_0$,
 the solution $u$ is global, and
 \begin{align*}
\|u\|_{L^{\infty}_T(\dot{FB}^{n(1-\frac 1p)-\alpha+\sigma+1}_{p,q})}+\|u\|_{\tilde{L}^{1}_T(\dot{FB}^{n(1-\frac 1p)+\sigma+1}_{p,q})}
\leq 2C\|u_0\|_{\dot{FB}_{p,q}^{n(1-\frac 1p)-\alpha+\sigma+1}}.
\end{align*}
\end{theorem}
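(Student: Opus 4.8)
The plan is to prove Theorem \ref{theorem31} via a fixed-point argument built on Lemma \ref{absolution02}, using the integral formulation \eqref{equation31}. First I would fix the working space
\[
\X:=\tilde{L}^{\infty}_T(\dot{FB}^{\beta}_{p,q})\cap \tilde{L}^{1}_T(\dot{FB}^{\beta+\alpha}_{p,q}),\qquad \beta:=n(1-\tfrac1p)-\alpha+\sigma+1,
\]
with norm the sum of the two pieces, and observe that the hypothesis on $\alpha$ forces $\beta>-1$ (equivalently $\alpha<n(1-1/p)+\sigma+2$), which is exactly the regularity threshold needed for the paraproduct estimates, while the lower bound $\alpha>2\max\{1,\sigma+1\}$ leaves room to split $\alpha$ into two positive halves in the time exponents.

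The second step is the bilinear estimate: I would show $H:\X\times\X\to\X$ is bounded with $\|H(u,v)\|_{\X}\le \eta\|u\|_{\X}\|v\|_{\X}$. The engine is Proposition \ref{propositon32}, which already gives
\[
\|H(u,v)\|_{\tilde{L}^{\infty}_T(\dot{FB}^{\beta}_{p,q})}+\|H(u,v)\|_{\tilde{L}^{1}_T(\dot{FB}^{\beta+\alpha}_{p,q})}
\le C\|u\|_{\tilde{L}^{2}_T(\dot{FB}^{n(1-\frac1p)-\epsilon}_{p,q})}\|v\|_{\tilde{L}^{2}_T(\dot{FB}^{\beta+\sigma+\epsilon+1}_{p,q})}+(u\leftrightarrow v),
\]
so the task reduces to controlling both $\tilde{L}^{2}_T(\dot{FB}^{n(1-\frac1p)-\epsilon}_{p,q})$ and $\tilde{L}^{2}_T(\dot{FB}^{\beta+\sigma+\epsilon+1}_{p,q})$ by $\|\cdot\|_{\X}$. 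This is where the interpolation Lemma \ref{interpolation01} enters: with $r_1=\infty$, $r_2=1$, $\theta=1/2$ it gives $\tilde{L}^2_T(\dot{FB}^{\beta+\alpha/2}_{p,q})\hookleftarrow \X$, so I need to choose $\epsilon$ so that the two target regularities $n(1-1/p)-\epsilon$ and $\beta+\sigma+\epsilon+1$ both equal $\beta+\alpha/2$. The first equation reads $\epsilon=n(1-1/p)-\beta-\alpha/2=\alpha/2-\sigma-1$, and one checks the second equation gives the same value; the constraint $\epsilon>\max\{0,-\sigma\}$ then translates exactly to $\alpha>2\max\{1,\sigma+1\}$, which is the hypothesis. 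So $\eta=\eta(\alpha,p,q)$ is a genuine constant and the bilinear bound holds on $[0,\infty)$.

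The third step splits into the small-data and large-data cases. For small data, the linear term $S(t)u_0$ lies in $\X$ with $\|S(t)u_0\|_{\X}\le C\|u_0\|_{\dot{FB}^{\beta}_{p,q}}$ by Lemma \ref{transport} (applied with $f=0$, $r=\infty$ and $r=1$), uniformly in $T$; if $\|u_0\|_{\dot{FB}^{\beta}_{p,q}}\le C_0$ with $C_0$ chosen so that $C C_0<\frac{1}{4\eta}$, Lemma \ref{absolution02} yields a global solution with $\|u\|_{\X}\le 2C\|u_0\|_{\dot{FB}^{\beta}_{p,q}}$, which is the stated bound. For large data, I would use the fact that $\|S(t)u_0\|_{\tilde{L}^1_T(\dot{FB}^{\beta+\alpha}_{p,q})}\to 0$ as $T\to 0$ (dominated convergence in the $\ell^q$ sum, since each $\|e^{-t2^{j\alpha}(3/4)^\alpha}\varphi_j\hat u_0\|_{L^1_T L^p}\to0$ and is dominated), hence also the full $\X$-norm of the linear part on $[0,T)$ can be made $<\epsilon<\frac{1}{4\eta}$ for $T$ small; then Lemma \ref{absolution02} gives a unique local mild solution. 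The time-continuity $u\in C([0,T);\dot{FB}^{\beta}_{p,q})$ follows from the integral equation together with strong continuity of the semigroup on each dyadic block and the $\tilde L^1_T(\dot{FB}^{\beta}_{p,q})$-control of the forcing. Uniqueness in the ball follows from the Lipschitz/continuity statement in Lemma \ref{absolution02}.

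The main obstacle is the second step: verifying that the \emph{single} exponent $\epsilon$ forced by matching $n(1-1/p)-\epsilon$ to $\beta+\alpha/2$ is simultaneously compatible with the other target $\beta+\sigma+\epsilon+1=\beta+\alpha/2$ and with the admissibility condition $\epsilon>\max\{0,-\sigma\}$ of Proposition \ref{propositon32}; this is a bookkeeping juggle between the four parameters $n(1-1/p),\alpha,\sigma,\epsilon$, and it is precisely here that the scaling-critical choice of the space $\dot{FB}^{\beta}_{p,q}$ and the hypothesis $2\max\{1,\sigma+1\}<\alpha<n(1-1/p)+\sigma+2$ are used in an essential way. Everything else — the linear estimates, the smallness of $S(t)u_0$ for short time, the abstract fixed point — is routine given the lemmas already established.
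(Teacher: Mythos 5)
Your setup, the choice $\epsilon=\frac{\alpha}{2}-\sigma-1$, the verification that both regularity targets collapse to $\beta+\frac{\alpha}{2}$, and the small-data global argument all match the paper and are correct. The genuine gap is in your large-data local existence step. You propose to run the contraction of Lemma \ref{absolution02} in $\X=\tilde{L}^{\infty}_T(\dot{FB}^{\beta}_{p,q})\cap\tilde{L}^{1}_T(\dot{FB}^{\beta+\alpha}_{p,q})$ and to make $\|S(t)u_0\|_{\X}$ small by shrinking $T$. The $\tilde{L}^1_T(\dot{FB}^{\beta+\alpha}_{p,q})$ component does vanish as $T\to0$ (for $q<\infty$), but the $\tilde{L}^{\infty}_T(\dot{FB}^{\beta}_{p,q})$ component does not: since $\sup_{0\le t<T}e^{-t(\frac34 2^{j})^{\alpha}}=1$ for every $T>0$, one has
\begin{align*}
\|S(\cdot)u_0\|_{\tilde{L}^{\infty}_T(\dot{FB}^{\beta}_{p,q})}\ \geq\ \|u_0\|_{\dot{FB}^{\beta}_{p,q}}
\end{align*}
for all $T>0$. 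Hence for large $u_0$ the source term $y=S(t)u_0$ can never satisfy $\|y\|_{\X}<\frac{1}{4\eta}$, and Lemma \ref{absolution02} cannot be invoked in your space; your "hence also the full $\X$-norm \dots can be made small" is exactly the step that fails.

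This is precisely why the paper does \emph{not} contract in $\X$ but in the intermediate Chemin--Lerner space $X=\tilde{L}^{2}_T(\dot{FB}^{\beta+\frac{\alpha}{2}}_{p,q})$ (into which $\X$ embeds by Lemma \ref{interpolation01}, so the same bilinear estimate holds there). In $X$ the time integrability is finite, so the linear evolution of a fixed datum does become small for short time: one splits $u_0=u_0^1+u_0^2$ by a frequency cutoff at $\lambda$, makes $\|S(\cdot)u_0^2\|_{X}$ small by taking $\lambda$ large (this uses $q<\infty$, as does your dominated-convergence argument), and bounds $\|S(\cdot)u_0^1\|_{X}\le C\lambda^{\frac{\alpha}{2}}T^{\frac12}\|u_0\|_{\dot{FB}^{\beta}_{p,q}}$, which is small for small $T$. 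The contraction then produces the solution in $X$, and the membership in $C([0,T);\dot{FB}^{\beta}_{p,q})\cap\tilde{L}^1_T(\dot{FB}^{\beta+\alpha}_{p,q})$ is recovered \emph{a posteriori} from the Duhamel formula, Lemma \ref{transport} and Proposition \ref{propositon32}. To repair your proof you should adopt this two-space structure (contract in $X$, upgrade to $\X$ afterwards); the rest of your argument then goes through.
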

\begin{proof}
First suppose $t\in[0,T]$, $T$ fixed.
Let $\epsilon=\frac{\alpha}{2}-\sigma-1$, $\beta=n(1-\frac 1p)-\alpha+\sigma+1$, by the above proposition
\begin{align*}
&\|H(u,v)\|_{L^{\infty}_T(\dot{FB}^{n(1-\frac 1p)-\alpha+\sigma+1}_{p,q})}+\|H(u,v)\|_{\tilde{L}^{1}_T(\dot{FB}^{n(1-\frac 1p)+\sigma+1}_{p,q})}
\\&\leq C\|u\|_{\tilde{L}^{2}_T(\dot{FB}^{n(1-\frac 1p)-\frac{\alpha}{2}+\sigma+1}_{p,q})}
         \|v\|_{\tilde{L}^{2}_T(\dot{FB}^{n(1-\frac 1p)-\frac{\alpha}{2}+\sigma+1}_{p,q})}.
\end{align*}
Define $X=\tilde{L}^{2}_T(\dot{FB}^{n(1-\frac 1p)-\frac{\alpha}{2}+\sigma+1}_{p,q})$, by Lemma \ref{interpolation01}
\begin{align*}
&\|H(u,v)\|_X\leq C\|u\|_X\|v\|_X.
\end{align*}
By Lemma \ref{absolution02} we know that if $\|e^{-t\Lambda^\alpha}u_0\|_X<\frac {1}{4C}$, then (\ref{equation31}) has a unique solution in $B(0,\frac {1}{2C})$.
Here $B(0,\frac {1}{2C}):=\{x\in X: \|x\|_X\leq \frac {1}{2C}\}$.

To conclude $\|e^{-t\Lambda^\alpha}u_0\|_X<\frac {1}{4C}$, we first note that $e^{-t\Lambda^\alpha}u_0$ is the solution to (2.2) with $f=0, u_0=u_0$,
by Lemma \ref{transport}, we can obtain
\begin{align}\label{estimate01}
\|e^{-t\Lambda^\alpha}u_0\|_{X}\leq C\|u_0\|_{\dot{FB}_{p,q}^{n(1-\frac 1p)-\alpha+\sigma+1}}.
\end{align}
Hence if $\|u_0\|_{\dot{FB}_{p,q}^{n(1-\frac 1p)-\alpha+\sigma+1}}\leq \frac{1}{4C^2},$ (3.1) has a unique global solution in $X$.
Moreover, $\|u\|_X\leq 2C\|u_0\|_{\dot{FB}_{p,q}^{n(1-\frac 1p)-\alpha+\sigma+1}}$.

On the other hand, denote
$u_0=\mathcal{F}^{-1}\chi_{\{|\xi|\leq \lambda\}}\widehat{u_0}+\mathcal{F}^{-1}\chi_{\{|\xi|>\lambda\}}\widehat{u_0}:=u_0^1+u_0^2$,
where $\lambda=\lambda(u_0)>0$ is a real number determined later. Since $u_0^2$ converges to 0 in $\dot{FB}_{p,q}^{n(1-\frac 1p)-\alpha+\sigma+1}$
as $\lambda\rightarrow+\infty$. By (\ref{estimate01}) there exists $\lambda$ large enough such that
 \[
  \|e^{-t\Lambda^\alpha}u_0^2\|_X\leq \frac{1}{8C}.
 \]
For $u_0^1$,
 \begin{align*}
   \|e^{-t\Lambda^\alpha}u_0^1\|_X
    &=\big\|2^{j(n(1-\frac 1p)-\frac{\alpha}{2}+\sigma+1)}\|\varphi_je^{-t|\xi|^\alpha}\chi_{\{|\xi|\leq \lambda\}}\widehat{u_0}\|_{L_T^2(L^p)}\big\|_{l^q}   \\
    & \leq \big\|2^{j(n(1-\frac 1p)-\frac{\alpha}{2}+\sigma+1)}
      \|\sup_{|\xi|\leq \lambda}e^{-t|\xi|^\alpha}|\xi|^{\frac{\alpha}{2}}\|_{L_T^2}
      \ \|\varphi_j|\xi|^{-\frac{\alpha}{2}}\hat{u_0}\|_{L^p}\big\|_{l^q}   \\
    &\leq C \lambda^{\frac{\alpha}{2}} T^{\frac12}\|u_0\|_{\dot{FB}_{p,q}^{n(1-\frac 1p)-\alpha+\sigma+1}}.
 \end{align*}
Thus for arbitrary $u_0$ in $\dot{FB}_{p,q}^{n(1-\frac 1p)-\alpha+\sigma+1}$, (3.1) has a unique local solution in $X$ on $[0,T)$ where
    \[
      T\leq\Big(\frac{1}{8C^2\lambda^{\frac{\alpha}{2}}\|u_0\|_{\dot{FB}_{p,q}^{n(1-\frac 1p)-\alpha+\sigma+1}}}\Big)^2.
    \]
 The continuity with respect to time is standard.
\end{proof}
Next we give a blowup criterion as following:
   \begin{theorem}
       Let $T^*$ denote the maximal time of existence of $u$ in
       $L^{\infty}_T(\dot{FB}^{\beta}_{p,q})\cap\tilde{L}^{1}_T(\dot{FB}^{\beta+\alpha}_{p,q})$. Here $\beta=n(1-\frac 1p)-\alpha+\sigma+1$.
       If $T^*<\infty$, then
        \begin{align}
           \|u\|_{\tilde{L}^1([0,T^*);\dot{FB}_{p,q}^{\beta+\alpha})}=\infty.
        \end{align}
   \end{theorem}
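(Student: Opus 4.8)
The plan is to argue by contradiction: assume $T^*<\infty$ while $\|u\|_{\tilde{L}^1([0,T^*);\dot{FB}^{\beta+\alpha}_{p,q})}<\infty$, and show that the mild solution can then be continued beyond $T^*$, contradicting maximality. The driving estimate is, in essence, Proposition 3.1 paired with the linear bound of Lemma 2.3. For $s<t<T^*$ I would view $u|_{[s,t]}$ as the solution of (2.2) with datum $u(s)$ and source $f=\nabla\cdot(u\nabla Pu)=\sum_i\partial_i(u\,\partial_iPu)$; applying Lemma 2.3 with $r=\infty$ and with $r=1$ and adding, then Proposition 3.1 to each $u\,\partial_iPu$ at regularity index $\beta+1$ (to pay for the extra $\partial_i$), with $\gamma=1$, $\gamma_1=\infty$, $\gamma_2=1$ and the choice $\epsilon=\alpha-\sigma-1$, one obtains
\[
\|u\|_{\tilde{L}^\infty([s,t];\dot{FB}^\beta_{p,q})}+\|u\|_{\tilde{L}^1([s,t];\dot{FB}^{\beta+\alpha}_{p,q})}\le C\|u(s)\|_{\dot{FB}^\beta_{p,q}}+C\|u\|_{\tilde{L}^\infty([s,t];\dot{FB}^\beta_{p,q})}\,\|u\|_{\tilde{L}^1([s,t];\dot{FB}^{\beta+\alpha}_{p,q})}.
\]
The value $\epsilon=\alpha-\sigma-1$ is the one that makes this work: the hypotheses of Proposition 3.1 ($\epsilon>\max\{0,-\sigma\}$ and regularity index $\beta+1>0$) are exactly what the assumptions on $\alpha$ guarantee, and simultaneously $n(1-\tfrac1p)-\epsilon=\beta$ and $(\beta+1)+\sigma+\epsilon=\beta+\alpha$, so the two factors on the right are precisely the $\tilde{L}^\infty_t(\dot{FB}^\beta)$-norm and the critical quantity $\|u\|_{\tilde{L}^1_t(\dot{FB}^{\beta+\alpha})}$ figuring in the statement.

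Since $\|u\|_{\tilde{L}^1([0,T^*);\dot{FB}^{\beta+\alpha}_{p,q})}<\infty$, absolute continuity of the time integral provides $t_0<T^*$ with $C\|u\|_{\tilde{L}^1([t_0,T^*);\dot{FB}^{\beta+\alpha}_{p,q})}\le\tfrac12$; taking $s=t_0$ above and letting $t\uparrow T^*$, the last term is absorbed into the left side, giving $\|u\|_{\tilde{L}^\infty([t_0,T^*);\dot{FB}^\beta_{p,q})}\le 2C\|u(t_0)\|_{\dot{FB}^\beta_{p,q}}<\infty$. As $u$ solves the equation on $[0,t_0]$ it also lies in $\tilde{L}^\infty([0,t_0];\dot{FB}^\beta_{p,q})$ (e.g.\ by Proposition 3.2 applied to its Duhamel part), so $\|u\|_{\tilde{L}^\infty([0,T^*);\dot{FB}^\beta_{p,q})}=:\mathcal{M}<\infty$; then Lemma 2.2 (interpolation with $\theta=\tfrac12$, $r_1=\infty$, $r_2=1$) gives $u\in\tilde{L}^2([0,T^*);\dot{FB}^{\beta+\alpha/2}_{p,q})$, with $\|u\|_{\tilde{L}^2([t_0,T^*);\dot{FB}^{\beta+\alpha/2}_{p,q})}\to 0$ as $t_0\uparrow T^*$.

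The genuinely delicate point is that the local existence time furnished by Theorem 3.1 for a datum $w$ depends on $w$ not only through $\|w\|_{\dot{FB}^\beta_{p,q}}$ but also through the frequency $\lambda(w)$ beyond which $w$ is spectrally small, so the bound $\mathcal{M}$ by itself does not yield a uniform restart step. I would remove this obstruction by observing that $u\in\tilde{L}^\infty([0,T^*);\dot{FB}^\beta_{p,q})$ means the series $\sum_{j\in\mathbb{Z}}2^{j\beta q}\sup_{0\le s<T^*}\|\varphi_j\widehat{u(s)}\|_{L^p}^q$ converges, so its tail over $\{j:2^j\ge\mu\}$ tends to $0$ as $\mu\to\infty$; hence the spectral tails of the $u(s)$, $s<T^*$, are uniformly small, a single frequency $\lambda_*$ serves as $\lambda(u(s))$ for every $s$, and re-running the proof of Theorem 3.1 then supplies a lower bound $T_{\min}>0$, independent of $s<T^*$, for the existence time of the mild solution started from $u(s)$.

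To conclude, I would pick $t_0<T^*$ with $T^*-t_0<T_{\min}/2$ and small enough that the uniqueness statement of Lemma 2.1 applies to $u$ on $[t_0,T^*)$ (possible since $\|u\|_{\tilde{L}^2([t_0,T^*);\dot{FB}^{\beta+\alpha/2}_{p,q})}\to0$): Theorem 3.1 with datum $u(t_0)$ produces a mild solution on $[t_0,t_0+T_{\min})\supset[t_0,T^*+T_{\min}/2)$ that, by that uniqueness, coincides with $u$ on $[t_0,T^*)$; gluing yields a mild solution on $[0,T^*+T_{\min}/2)$ in $L^\infty([0,T);\dot{FB}^\beta_{p,q})\cap\tilde{L}^1([0,T);\dot{FB}^{\beta+\alpha}_{p,q})$ for every $T<T^*+T_{\min}/2$, contradicting the maximality of $T^*$. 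Thus $\|u\|_{\tilde{L}^1([0,T^*);\dot{FB}^{\beta+\alpha}_{p,q})}=\infty$ whenever $T^*<\infty$. Apart from these last two steps the argument is a mechanical reuse of Lemma 2.3, Proposition 3.1 (with $\epsilon=\alpha-\sigma-1$) and absolute continuity of time integrals; the only real obstacle is the non-uniformity of the local existence time in Theorem 3.1, which I bypass via the uniform spectral smallness of $u(s)$ as $s\uparrow T^*$.
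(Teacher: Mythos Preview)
Your proof is correct and shares its first half with the paper's: you argue by contradiction, apply Lemma~2.3 and Proposition~3.1 (your choice $\epsilon=\alpha-\sigma-1$ is exactly right and matches the paper's implicit choice $\gamma_1=\infty$, $\gamma_2=1$), absorb the nonlinear term once $\|u\|_{\tilde L^1([t_0,T^*);\dot{FB}^{\beta+\alpha}_{p,q})}\le\tfrac12$, and conclude $\sup_{t<T^*}\|u(t)\|_{\dot{FB}^\beta_{p,q}}<\infty$.

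Where you diverge from the paper is in how you push past $T^*$. The paper proves that $t\mapsto u(t)$ is Cauchy in $\dot{FB}^\beta_{p,q}$ as $t\nearrow T^*$: it writes $u(t_2)-u(t_1)=L_1+L_2+L_3$ (difference of semigroups on the datum, difference of semigroups under the Duhamel integral on $[0,t_1]$, and the Duhamel tail on $[t_1,t_2]$), bounds each term, and invokes dominated convergence; the limit $u^*:=\lim_{t\uparrow T^*}u(t)$ is then taken as a new initial datum and Theorem~3.1 is applied once. You instead exploit that $u\in\tilde L^\infty([0,T^*);\dot{FB}^\beta_{p,q})$ forces the high-frequency tails of $u(s)$ to be \emph{uniformly} small in $s$, so a single cut-off frequency $\lambda_*$ serves for every restart, and hence the local existence time in Theorem~3.1 has a uniform lower bound $T_{\min}$; restarting from any $u(t_0)$ with $T^*-t_0<T_{\min}/2$ then yields the contradiction. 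Your route sidesteps the $L_1+L_2+L_3$ Cauchy computation at the price of an extra observation (uniform spectral tails) and of checking uniqueness on the overlap, which you correctly handle via interpolation into $\tilde L^2_T(\dot{FB}^{\beta+\alpha/2}_{p,q})$ and the ball-uniqueness of Lemma~2.1. The paper's approach is slightly more direct and produces the stronger information $u\in C([0,T^*];\dot{FB}^\beta_{p,q})$; your approach is a legitimate alternative, and your care with the non-uniformity of the local existence time is a point the paper's argument simply avoids by first passing to a single limit datum $u^*$. Both arguments share the same tacit restriction $q<\infty$ needed for the tail/dominated-convergence step.
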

 \begin{proof}
  Supposing $\|u\|_{\tilde{L}^1([0,T^*);\dot{FB}_{p,q}^{\beta+\alpha})}dt<\infty$, then we can find $0<T_0<T^*$ satisfying
  \begin{align*}
       \|u\|_{\tilde{L}_1([T_0,T^*);\dot{FB}_{p,q}^{\beta+\alpha})}<\frac 12.
    \end{align*}
  For $t\in [T_0,T^*]$, $s\in[T_0,t]$, by Lemma \ref{transport}
  \begin{align*}
  \|u(s)\|_{\dot{FB}_{p,q}^{\beta}}+\|u\|_{\tilde{L}^1([T_0,s);\dot{FB}_{p,q}^{\beta+\alpha})}&\leq
   \|u(T_0)\|_{\dot{FB}_{p,q}^{\beta}}+\|u\|_{L^\infty([T_0,s);\dot{FB}_{p,q}^{\beta})}\|u\|_{\tilde{L}^1([T_0,s);\dot{FB}_{p,q}^{\beta+\alpha})}\\
   &\leq\|u(T_0)\|_{\dot{FB}_{p,q}^{\beta}}+\frac 12\|u\|_{L^\infty([T_0,s);\dot{FB}_{p,q}^{\beta})}.
  \end{align*}
  So,
    \begin{align*}
  \sup_{T_0\leq s\leq t}\|u(s)\|_{\dot{FB}_{p,q}^{\beta}}\leq\|u(T_0)\|_{\dot{FB}_{p,q}^{\beta}}+\frac 12\|u\|_{L^\infty([T_0,t);\dot{FB}_{p,q}^{\beta})}.
  \end{align*}
  Put
  \begin{align*}
  M=\max(2\|u(T_0)\|_{\dot{FB}_{p,q}^{\beta}},\max_{t\in[0,T_0]}\|u\|_{\dot{FB}_{p,q}^{\beta}}),
  \end{align*}
  we can get
   \begin{align*}
  \|u(t)\|_{\dot{FB}_{p,q}^{\beta}}\leq M,\quad   \forall t \in[0,T^*].
  \end{align*}
  On the other side,
  \begin{align*}
  u(t_2)-u(t_1)&=e^{-t_2|D|^\alpha}u_0-e^{-t_1|D|^\alpha}u_0+\int_0^{t_2}e^{-(t_2-\tau)|D|^\alpha}\nabla\cdot(uPu)(\tau){\rm d}\tau\\
  &-\int_0^{t_1}e^{-(t_1-\tau)|D|^\alpha}\nabla\cdot(uPu)(\tau){\rm d}\tau\\
  &=[e^{-t_2|D|^\alpha}u_0-e^{-t_1|D|^\alpha}u_0]+[\int_0^{t_1}e^{-(t_1-\tau)|D|^\alpha}(e^{-(t_2-t_1)|D|^\alpha}-1)\nabla\cdot(uPu)(\tau){\rm d}\tau]\\
  &+[\int_{t_1}^{t_2}e^{-(t_2-\tau)|D|^\alpha}\nabla\cdot(uPu)(\tau){\rm d}\tau]\\
  &:=L_1+L_2+L_3.
 \end{align*}
   \begin{align*}
   \|L_1\|_{\dot{FB}_{p,q}^{\beta}}&=\big\|2^{j\beta}\|\varphi_j(e^{-t_2|\xi|^\alpha}-e^{-t_1|\xi|^\alpha})\hat{u}_0\|_{L^p}\big\|_{l^q}\\
   &\leq\big\|2^{j\beta}\|\varphi_j(e^{-(t_2-t_1)|\xi|^\alpha}-1)\hat{u}_0\|_{L^p}\big\|_{l^q}\\
   &\leq \|u_0\|_{\dot{FB}_{p,q}^{\beta}}.
     \end{align*}
  \begin{align*}
   \|L_2\|_{\dot{FB}_{p,q}^{\beta}}&\leq\big\|2^{j\beta}\int_0^{t_1}
   \|\varphi_je^{-(t_2-\tau)|\xi|^\alpha}(1-e^{-(t_2-t_1)|\xi|^\alpha})\mathcal{F}(\nabla\cdot(uPu)(\tau))\|_{L^p}{\rm d}\tau\big\|_{l^q}\\
   &\leq\big\|2^{j(\beta+1)}\int_0^{t_1}\|\varphi_j(e^{-(t_2-t_1)|\xi|^\alpha}-1)\mathcal{F}(uPu)(\tau)\|_{L^p}{\rm d}\tau\big\|_{l^q}.
     \end{align*}
     \begin{align*}
    \|L_3\|_{\dot{FB}_{p,q}^{\beta}}&\leq\big\|2^{j\beta}\int_{t_1}^{t_2}
    \|\varphi_je^{-(t_2-\tau)|\xi|^\alpha}\mathcal{F}(\nabla\cdot(uPu)(\tau))\|_{L^p}{\rm d}\tau\big\|_{l^q}\\
   &\leq\big\|2^{j(\beta+1)}\int_{t_1}^{t_2}\|\varphi_j\mathcal{F}(uPu)(\tau)\|_{L^p}{\rm d}\tau\big\|_{l^q}.
     \end{align*}
     By the dominated convergence theorem, we can get
     \begin{align*}
    \limsup_{t_1,t_2\nearrow T^*,t_1<t_2}\|u(t_2)-u(t_1)\|_{\dot{FB}_{p,q}^{\beta}}=0.
     \end{align*}
     Then there is an element $u^*$ of $\dot{FB}_{p,q}^{\beta}$ such that
     \begin{align*}
    \lim_{t\nearrow T^*}\|u(t)\|_{\dot{FB}_{p,q}^{\beta}}=u^*.
     \end{align*}
Now set $u(T^*)=u^*$ and consider the equation starting by $u^*$, by the well-posedness we obtain a solution
  existing on a larger time interval than $[0,T^*)$, which is a contradiction.
 \end{proof}

\section{Improvement of the index}
The index in Theorem \ref{theorem31} is not a nature one since $\alpha>2$ is a very strong condition. However,
the method used here can in fact gain some better index range by a slight modification. The key point is that
we can seek the solution firstly in the space
     $X:={\tilde{L}^{r}_T(\dot{FB}^{\beta+\frac{\alpha}{r}}_{p,q})} \cap {\tilde{L}^{r'}_T(\dot{FB}^{\beta+\frac{\alpha}{r'}}_{p,q})}$
instead of $\tilde{L}^{2}_T(\dot{FB}^{\beta+\frac{\alpha}{2}}_{p,q})$, with $\beta=n(1-\frac{1}{p})-\alpha+\sigma+1$. Since the proof are very similar, we list the key
steps.

\textbf{Step 1:} Taking respectively $r$ and $r'$ in Lemma \ref{transport}, and using the proof of Proposition  \ref{propositon32}, we get
for $\epsilon>\max\{0,-\sigma\}$, $\beta>-1$,
    \begin{align*}
     &\|H(u,v)\|_{\tilde{L}^{r}_T(\dot{FB}^{\beta+\frac{\alpha}{r}}_{p,q})}+\|H(u,v)\|_{\tilde{L}^{r'}_T(\dot{FB}^{\beta+\frac{\alpha}{r'}}_{p,q})}  \\
     &\leq C\|u\|_{\tilde{L}^{r}_T(\dot{FB}^{n(1-\frac 1p)-\epsilon}_{p,q})}\|v\|_{\tilde{L}^{r'}_T(\dot{FB}^{\beta+\sigma+\epsilon+1}_{p,q})}
      +C\|v\|_{\tilde{L}^{r}_T(\dot{FB}^{n(1-\frac 1p)-\epsilon}_{p,q})}\|u\|_{\tilde{L}^{r'}_T(\dot{FB}^{\beta+\sigma+\epsilon+1}_{p,q})}.
\end{align*}
Set $\epsilon=\frac{\alpha}{r'}-\sigma-1$, $\beta=n(1-\frac{1}{p})-\alpha+\sigma+1$, we then gain the important bilinear estimate
$H(u,v)\leq C\|u\|_{X} \|v\|_{X}$ under the condition: 
$$r'\max\{1,\sigma+1\}<\alpha<n(1-\frac{1}{p})+\sigma+2.$$
Thus by Lemma \ref{absolution02}
we know that if $\|e^{-t\Lambda^{\alpha}}u_0\|_X<\frac{1}{4C}$, then equation (\ref{equation31}) admits a unique solution in $X$.

\textbf{Step 2:} Now we need to derive $\|e^{-t\Lambda^{\alpha}}u_0\|_X<\frac{1}{4C}$. Since $e^{-t\Lambda^\alpha}u_0$ is the solution to (\ref{equation22})
with $f=0, u_0=u_0$, by Lemma \ref{transport} we gain the global solution in $X$ for small initial data. On the other hand, we can also obtain the local
solution on $[0,T)$ in $X$ by the same method in Theorem \ref{theorem31} for arbitrary initial data, where
    \[
      T\leq\min\Big\{\Big(\frac{1}{16C^2\lambda^{\frac{\alpha}{r}}\|u_0\|_{\dot{FB}_{p,q}^{\beta}}}\Big)^r,
       \big(\frac{1}{16C^2\lambda^{\frac{\alpha}{r'}}\|u_0\|_{\dot{FB}_{p,q}^{\beta}}}\Big)^{r'}\Big\}.
    \]
\textbf{Step 3:} We have proved equation has an unique solution in $X$ under the condition:
$r'\max\{1,\sigma+1\}<\alpha<n(1-\frac{1}{p})+\sigma+2$, $1<r<\infty$.
Using the integral form (\ref{equation31}) and Lemma \ref{transport}, we can deduce
    \[
     \|u\|_{\tilde{L}^{\infty}_T(\dot{FB}^{\beta}_{p,q})}+\|u\|_{\tilde{L}^{1}_T(\dot{FB}^{\beta+\alpha}_{p,q})}
     \leq C(\|u_0\|_{\dot{FB}^{\beta}_{p,q}}+\|u\|_X).
    \]
Hence $u$ is also belongs to $C([0,T);\dot{FB}^{\beta}_{p,q})\cap{\tilde{L}^{1}_T(\dot{FB}^{\beta+\alpha}_{p,q})}$. Since
$1<r<\infty$ in $X$ can be chose to be a sufficiently large number, we in fact improve the index in Theorem \ref{theorem31}
to
    $$\max\{1,\sigma+1\}<\alpha<n(1-\frac{1}{p})+\sigma+2.$$
Besides, this improvement make no difference to the proof of Theorem 3.2.


\end{document}